\numberwithin{equation}{section}
\newtheorem{theorem}{Theorem}[section]
\newtheorem{lemma}[theorem]{Lemma}
\theoremstyle{definition}
\newtheorem{definition}[theorem]{Definition}
\newtheorem{remark}[theorem]{Remark}
\newtheorem{conjecture}{Conjecture}
\def\abs#1{\left|#1\right|}
\date{}
\def\A{\mathbb{A}}
\def\R{\mathbb{R}}
\def\C{\mathbb{C}}
\def\O{\mathbb{O}}
\def\HP{H^p(\mathbb{R}^8_+, \mathbb{O})}
\def\HN#1{\|#1\|_{H^p(\mathbb{R}^8_+, \mathbb{O})}}
\def\HNC#1{\|#1\|_{H^p(\mathbb{C}_+)}}
\def\P{\mathcal{P}(n)}
\newcommand{\bfe} {{\mathbf e}}
\begin{document}
\title[Gilbert's conjecture ]{Gilbert's conjecture and
A new way to octonionic analytic functions from the clifford analysis}
\date{\today}

\author{Yong Li}
\address{School of Mathematics and Statistics, Anhui Normal University, Wuhu 241002, Anhui, People's Republic of China}
\email{leeey@ahnu.edu.cn}

\thanks{The author was partially supported by Anhui Province Scientific Research Compilation Plan Project 2022AH050175.}

\subjclass[2020]{Primary 30A05, 30G35, 30H10 ; Secondary 42B30, 42B35}
\keywords{Clifford analysis, Hardy space, Gilbert's conjecture, Octonionic analytic functions, Octonionic Hardy space}


\begin{abstract}
In this article we will give a affirmative answer to Gilbert's conjecture on  Hardy spaces of Clifford analytic functions in upper half-space of $\R^8$.
It depends on a explicit construction of Spinor space $\mathcal{R}_8$ and Clifford algebra $Cl_8$ by octonion algbra. What's more , it gives us an associative way
to octonionic analytic function theory. And the similar question has been discussed in Octonionic Hardy space in upper-half space, some classical results about octonionic analytic functions have been reformulated, too.
\end{abstract}
\maketitle
\tableofcontents
\section{Introduction}
The classical $H^p$ theory - the study of spaces of functions analytic in the upper half-plane $\mathbb{C}_+=\{z\in \mathbb{C}: Imz>0\}$ started in 1915, when G.H. Hardy considered the  analytic functions in $\mathbb{C}_+$, with
$$\HNC{f}=\sup_{t>0} \left(\int_{\mathbb{R}}{\abs{F(x+it)}^p}\mathrm{d}x\right)^{\frac{1}{p}}<\infty.$$
With some great mathematicians efforts, the theory has developed into a elegant theory in mathematics.

A classical result about $H^p$ theory says, $H^p$ function can be determined by its real part of its non-tangent boundary value. We have following routine to get this result.
\begin{theorem}
  Suppose $F\in H^p(\mathbb{C}_+), p>1$. Then there is a function $f\in L^p(\R,\C)$ such that
  \begin{enumerate}
    \item $\lim\limits_{z\to x \  n.t} F(z)=f(x)$ exists for almost $x\in \R$.
    \item $\lim\limits_{t\to 0}\displaystyle\int_{-\infty}^{+\infty}\abs{F(x+it)-f(x)}^p\mathrm{d}x=0.$
  \end{enumerate}
\end{theorem}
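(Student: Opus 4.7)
The plan is to realize $F$ as the Poisson integral of a boundary function $f \in L^p(\R)$, and then to deduce both statements from classical real-variable estimates for Poisson integrals. The argument has three stages: (i) extract a weak boundary limit $f$; (ii) identify $F$ with the Poisson extension of $f$; (iii) upgrade to non-tangential a.e.\ convergence and to norm convergence.

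For stage (i), observe that $\sup_{t>0}\|F(\cdot+it)\|_{L^p(\R)} = \HNC{F} < \infty$ by hypothesis. Since $L^p(\R)$ is reflexive for $p>1$, the Banach--Alaoglu theorem produces a sequence $t_n \downarrow 0$ and an $f \in L^p(\R)$ along which $F(\cdot + it_n) \rightharpoonup f$ weakly in $L^p$, with $\|f\|_p \le \HNC{F}$.

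For stage (ii), fix $t_0 > 0$ and $\epsilon \in (0, t_0)$. Applying the Cauchy integral formula to $F(\cdot + i\epsilon)$ on a rectangle $[-R,R] \times [0,T]$, and using the uniform slice bound $\|F(\cdot+is)\|_{L^p} \le \HNC{F}$ together with H\"older's inequality to see that the vertical and horizontal-top contributions vanish as $R,T \to \infty$, one obtains
\[F(x + it_0) = \int_\R P_{t_0 - \epsilon}(x-y)\, F(y + i\epsilon)\, \mathrm{d}y,\]
where $P_s$ is the Poisson kernel of $\C_+$. Since $P_{t_0-\epsilon}(x-\cdot) \to P_{t_0}(x-\cdot)$ in $L^{p'}(\R)$ as $\epsilon \downarrow 0$ and $F(\cdot + it_n) \rightharpoonup f$ in $L^p$, passing to the limit along $\epsilon = t_n$ yields $F(x+it_0) = (P_{t_0} * f)(x)$. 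Stage (iii) then reduces to standard harmonic analysis. Assertion (1) follows because the non-tangential maximal function of the Poisson extension is dominated pointwise by the Hardy--Littlewood maximal function $Mf$, which is bounded on $L^p$ for $p>1$; combined with the fact that $(P_t * f)(x) \to f(x)$ at every Lebesgue point of $f$ and a density argument with $C_c(\R)$, this gives non-tangential convergence a.e. Assertion (2) follows from $\|P_t\|_{L^1} = 1$ (so Young's inequality gives the contraction $\|P_t * h\|_p \le \|h\|_p$), the uniform convergence $P_t * g \to g$ for $g \in C_c(\R)$, and the triangle inequality.

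The principal obstacle is stage (ii): a weak limit alone does not guarantee that $F$ is reproduced by the \emph{Poisson} kernel rather than merely by the Cauchy kernel. The argument hinges on a careful contour-closing estimate showing that the two vertical segments at $\pm R$ can be discarded in the limit, and this estimate depends crucially on the $L^p$ control of horizontal slices paired against the decay of the Cauchy/Poisson kernel via H\"older's inequality. Once this representation is in hand, the rest of the proof proceeds by well-known maximal-function machinery.
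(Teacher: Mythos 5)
The paper does not prove this statement; it is quoted as a classical fact about $H^p(\mathbb{C}_+)$ (analogous to the Clifford-algebra version cited as Theorem~\ref{thm:bdv} from Gilbert--Murray), so there is no internal proof to compare against. Judged on its own merits, your plan is the standard real-variable route: Banach--Alaoglu for a weak $L^p$ limit, a contour-closing argument to obtain the Poisson representation $F = P_{\mathrm{Im}\,z}*f$, and then maximal-function and approximate-identity machinery. That overall strategy is sound and is essentially what one finds in Duren, Koosis, or Garnett.

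There are, however, two places where the sketch conceals genuine work. First, applying ``the Cauchy integral formula'' to a rectangle produces the Cauchy kernel $\frac{1}{2\pi i}\frac{1}{w-z}$ on the bottom edge, \emph{not} the Poisson kernel. To land on $P_{t_0-\epsilon}$ you must also invoke Cauchy's theorem for the reflected point $z^*$ (the mirror image of $z$ across the bottom edge, which lies outside the contour) and subtract; the difference of the two Cauchy kernels on the bottom edge collapses to the Poisson kernel. Your write-up elides this step. Second, and more seriously, the claim that ``the vertical and horizontal-top contributions vanish as $R,T\to\infty$'' by H\"older alone is not correct as stated. H\"older controls the top edge once you couple $T\sim R$ (it produces a factor $R^{1/p'}/R^2\to 0$), but on a vertical edge H\"older leaves you with a factor $\bigl(\int_\delta^T|F(\pm R+is)|^p\,ds\bigr)^{1/p}$, and the uniform slice bound $\sup_s\|F(\cdot+is)\|_p\le\HNC{F}$ gives \emph{no} control on that quantity: it bounds integrals over horizontal lines, not vertical ones. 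You must either (a) average over $R\in[R_0,2R_0]$ using Fubini to show that for some $R$ in that range $\int_\delta^R|F(\pm R+is)|^p\,ds\lesssim M^p$, and discard the edges only along such a sequence $R_n\to\infty$; or (b) first derive a pointwise decay bound such as $|F(x+iy)|\lesssim M\,y^{-1/p}$ from the mean-value (sub-mean-value for $|F|^p$) inequality, and use it to estimate the far edges directly. Without one of these ingredients, stage~(ii) --- which you yourself flag as the crux --- does not close. Once the Poisson representation is secured, your stage~(iii) (non-tangential maximal $\lesssim Mf$, $L^p$ boundedness of $M$ for $p>1$, density of $C_c(\R)$, and $\|P_t*h\|_p\le\|h\|_p$) is a correct and complete account of assertions (1) and (2).
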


Conversely, for a $f\in L^p(\R,\C), p\ge 1$ we can consider its Cauchy integral $\mathcal{C}(f)$:
$$\mathcal{C}(f)=\frac{1}{2\pi i}\int_{-\infty}^{+\infty}\frac{f(t)}{t-z}\mathrm{d} t, \quad \quad z\in \C\setminus \R.$$
$\mathcal{C}(f)$ is analytic function on $\mathbb{C}_+$ with boundary value $\dfrac{1}{2}(f+iHf)$, where $H$ is the Hilbert transform operator defined by:
$$H(f)(x)=p.v.\frac{1}{\pi}\int_{-\infty}^{+\infty}\frac{f(y)}{x-y}\mathrm{d}y.$$
So we can view $H^p(\mathbb{C}_+)$ as $f\in L^p(\R,\C), f=iHf$. Actually we can identify  $H^p(\mathbb{C}_+)$ with $ L^p(\R,\R)$ by take real parts of those $f\in L^p(\R,\C)$ and $ f=iHf$.
Or roughly speaking, \begin{equation}\label{eq：complex}
           H^p(\mathbb{C}_+) =
           \begin{dcases}
           L^p(\R, \R), & \quad p>1\\
           \{f\in L^1(\R, \R): Hf\in L^1(\R, \R)\}, &\quad p=1.
           \end{dcases}
         \end{equation}

Are these things can be bringed into Clifford cases or octonionic case, this is a conjecture recorded in \cite[P.140 Conjecture 7.23]{Gilbert}.

\subsection{Clifford module-valued Hardy space}
Under the development of Clifford analysis, the Clifford Hardy space theory has be established completely similarly.
We recall some basics and the Gilbert's conjecture.

For a Clifford module $\mathfrak{H}$ (see Definition \ref{def:CM}), we said $F\in H^p(\R^n_+, \mathfrak{H})$ , if $F$ is a Clifford analytic function in upper-half space
$\R^n_+=\{(t, \underline{x}): t>0,\underline{x}\in \R^{n-1}\}$ and
$$\|F\|_{H^p(\R^n_+, \mathfrak{H})}=\sup_{t>0}\left(\int_{\mathbb{R}^{n-1}}\abs{F(t, \underline{x})}^p\mathrm{d} x\right)^{\frac{1}{p}}<\infty.$$

 Firstly, analogue of classical case, we shall show that for $p>\frac{n-2}{n-1}$, every $H^p\left(\mathbb{R}_{+}^n, \mathfrak{H}\right)$ function has almost-everywhere non-tangential limits.
\begin{theorem}\cite[P.120 Theorem 5.4]{Gilbert}\label{thm:bdv}
  Suppose $F \in H^p\left(\mathbb{R}_{+}^n, \mathfrak{H}\right), p>\frac{n-2}{n-1}$. Then there is a function $f \in$ $L^p\left(\mathbb{R}^{n-1}, \mathfrak{H}\right)$ such that
  \begin{enumerate}
    \item $\lim \limits_{z \rightarrow x  n.t.}$ $F(z)=f(x)$ exists for almost all $x \in \mathbb{R}^{n-1}$,
    \item $\lim \limits_{t \rightarrow 0}\displaystyle \int_{\mathbb{R}^{n-1}}|F(x, t)-f(x)|^p d x=0$.
  \end{enumerate}
\end{theorem}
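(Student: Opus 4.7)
The plan is to transport the classical one-variable $H^p$ argument to the Clifford setting, with the Cauchy--Riemann equations replaced by the Dirac equation and complex analyticity replaced by monogenicity. The crucial analytic input is the Stein--Weiss subharmonicity lemma: if $F$ is left Clifford-analytic on $\mathbb{R}^n_+$ then $|F|^s$ is subharmonic for every $s\ge \frac{n-2}{n-1}$. This is precisely why the condition $p>\frac{n-2}{n-1}$ appears: it lets me apply classical non-negative subharmonic theory to $|F|^p$, and even (with an $\epsilon$ to spare) to $|F|^{p-\epsilon}$, so as to get pointwise majorants by Poisson integrals of $L^{p/(p-\epsilon)}$ functions on the slices.

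First I would produce the candidate boundary function $f$. The defining norm gives $\sup_{t>0}\|F(\cdot,t)\|_{L^p}<\infty$. For $p>1$, reflexivity of $L^p(\mathbb{R}^{n-1},\mathfrak{H})$ yields a sequence $t_k\downarrow 0$ with $F(\cdot,t_k)\rightharpoonup f$ weakly, and $\|f\|_{L^p}\le \|F\|_{H^p}$. In the subcritical range $\frac{n-2}{n-1}<p\le 1$, where $L^p$ is not reflexive, I would instead use that $|F|^p$ is a non-negative subharmonic function on $\mathbb{R}^n_+$ with uniformly bounded $L^1$-slices, hence admits a least harmonic majorant of the form $P_t*\mu$ for a finite positive measure $\mu$ on $\mathbb{R}^{n-1}$. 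Combining this bound with the Clifford Cauchy integral formula applied to the slab $\{\epsilon<t<T\}$, I can pass to the limits $\epsilon\to 0$, $T\to\infty$ using the decay encoded in the $H^p$ norm and recover the representation
\[
F(x,t)=\int_{\mathbb{R}^{n-1}} C_t(x-y)\,f(y)\,dy,
\]
where $C_t$ is the (shifted) Clifford Cauchy kernel and $f\in L^p$ is identified via the harmonic majorant of $|F|^p$.

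With this representation in hand, non-tangential convergence (1) follows from standard approximate-identity arguments. The scalar part of $C_t$ is (a multiple of) the Poisson kernel of $\mathbb{R}^n_+$, so $\mathrm{Re}\,F(x,t)\to \mathrm{Re}\,f(x)$ non-tangentially a.e.\ by Lebesgue differentiation; the remaining basis components are obtained from the Poisson integral by convolution with Riesz-transform-type kernels, and convergence there reduces to the $L^p$-boundedness of the non-tangential maximal function of a Poisson integral on $\mathbb{R}^n_+$, uniformly controlled by the Hardy--Littlewood maximal function of $|f|$ (via subharmonicity of $|F|^{p-\epsilon}$ in the borderline range).

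For the $L^p$-norm convergence (2) I would dominate $|F(x,t)-f(x)|^p$ by $2^p$ times a fixed $L^1$ majorant, namely the $p$-th power of the non-tangential maximal function associated with $F$, which lies in $L^1(\mathbb{R}^{n-1})$ by the same subharmonic-majorant bound used above. Then dominated convergence against the a.e.\ limit obtained in (1) finishes the proof. I expect the genuine difficulty to be concentrated in the subcritical range $\frac{n-2}{n-1}<p\le 1$: soft weak-compactness fails, so one must instead combine the Stein--Weiss subharmonicity, the existence of a harmonic majorant for $|F|^p$, and a careful Cauchy integral representation to extract $f$ and to show that $F$ is genuinely the Cauchy integral of $f$ rather than of some singular measure. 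Once that extraction is done, the maximal-function and approximate-identity steps proceed by direct analogy with the classical $\mathbb{C}_+$ theory recalled in the introduction.
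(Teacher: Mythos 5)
The paper does not prove this theorem: it is imported verbatim from Gilbert--Murray, \cite[P.120 Theorem 5.4]{Gilbert}, and used as a black box throughout. There is therefore no paper-internal proof to compare against; the relevant benchmark is the Gilbert--Murray argument itself.

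Your proposal correctly reconstructs the essential skeleton of that argument. The Stein--Weiss critical index $(n-2)/(n-1)$ for the subharmonicity of $|F|^s$ is precisely the mechanism that makes the hypothesis $p>\frac{n-2}{n-1}$ bite, and your bifurcation into the reflexive range $p>1$ (weak-$\ast$ compactness of slices) versus the subcritical range $\frac{n-2}{n-1}<p\le 1$ (least harmonic majorant of $|F|^p$, which is a Poisson integral of a finite measure) is exactly how Gilbert--Murray proceed. The Cauchy-reproducing step on a slab $\{\epsilon<t<T\}$ followed by passage to the limit, the identification of the scalar piece of the Cauchy kernel with the Poisson kernel, and the control of the remaining components through the non-tangential maximal function are all faithful to the source; likewise the dominated-convergence closure of part (2) against the non-tangential maximal function. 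A few points you gloss over but which are genuinely laborious in the source: (i) in the range $p\le 1$ one must rule out a singular part of the boundary measure before one can assert $f\in L^p$ rather than $f\in M(\mathbb{R}^{n-1},\mathfrak{H})$ -- this is where the monogenicity (as opposed to mere harmonicity) of $F$ is used irreducibly, via the F.\ and M.\ Riesz phenomenon for Clifford systems; (ii) the ``$\epsilon$ to spare'' device, i.e.\ working with $|F|^{p_0}$ for some fixed $p_0$ with $\frac{n-2}{n-1}\le p_0<p$ so that $p/p_0>1$, is the correct way to access the $L^{p/p_0}$ maximal theory and should be stated explicitly rather than parenthetically; (iii) the slab Cauchy formula requires a justification that the contribution from the top face $t=T$ vanishes, which in turn needs a pointwise decay estimate for $F$ obtained from the $H^p$ bound and subharmonicity. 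None of these is a gap in the sense of a wrong idea, but they are the places where the actual proof spends its effort. Overall your reconstruction takes the same route as the cited source, with the hard points correctly identified though not fully discharged.
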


Then, for $p \geq 1$, we shall give a boundary integral characterization of $H^p\left(\mathbb{R}_{+}^n, \mathfrak{H}\right)$, whereby the Hardy space may be identified with a subspace of $L^p\left(\mathbb{R}^{n-1}, \mathfrak{H}\right)$ on the boundary.
More precisely, $f \in L^p\left(\mathbb{R}^{n-1}, \mathfrak{H}\right)$. We define the Cauchy integral of $f$ on $\mathbb{R}^{n-1}$ by setting
\begin{equation}\label{eq:Cauchy}
  C f(z)=\frac{1}{\omega_n} \int_{\mathbb{R}^{n-1}} \frac{u-z}{|u-z|^n} \bfe_0 f(u) d u
\end{equation}
for $z=(x, t) \in \mathbb{R}^n \backslash \mathbb{R}^{n-1}$. $C f$ is clearly analytic on $\mathbb{R}_{+}^n$, and
\begin{align*}
   C f(z)&=\frac{1}{2}\left(P_t * f\right)(x)+\frac{1}{2} \sum_{j=1}^{n-1}  \bfe_0\bfe_j\left(Q_t^{(j)} * f\right)(x)  \\
   & =\left\{P_t * \frac{1}{2}\left(I+\bfe_0\mathcal{H} \right) f\right\}(x), \quad z=(x, t) \in \mathbb{R}_{+}^n.
\end{align*}
Where $\mathcal{H}=\sum_{j=1}^{n-1} \bfe_j R_j$, and $R_j$ is the $j$ th Riesz transform, given by
$$
R_j g(x)=\frac{2}{\omega_n} \int_{\mathbb{R}^{n-1}} \frac{x_j-u_j}{|x-u|^n} g(u) \mathrm{d} u, \quad \quad 1 \leq j \leq n-1
$$
and
\begin{align*}
  P_t(x)=&\frac{2}{\omega_n} \frac{t}{\left[t^2+|x|^2\right]^{n / 2}}=\frac{2}{\omega_n} \frac{1}{t^{n-1}} \frac{1}{\left[1+|x / t|^2\right]^{n / 2}}  \\
  Q_t^{(j)}(x)=& \frac{2}{\omega_n} \frac{x_j}{\left[t^2+|x|^2\right]^{n / 2}}=\frac{2}{\omega_n} \frac{1}{t^{n-1}} \frac{x_j / t}{\left[1+|x / t|^2\right]^{n / 2}}, \quad j=1,\cdots, n-1,
\end{align*}
$$$$
is the Poisson kernel  and $j$ th conjugate Poisson kernel for $\mathbb{R}^{n-1}$,
the following result is a straightforward consequence of the properties of the Poisson kernel.

\begin{theorem}\cite[P.122 Theorem 5.16]{Gilbert}\label{thm:Hp}
Suppose that either (i) $1<p<\infty$ and $f \in L^p\left(\mathbb{R}^{n-1}, \mathfrak{H}\right)$, or (ii) $p=$ 1 and $f, \mathcal{H} f \in L^1\left(\mathbb{R}^{n-1}, \mathfrak{H}\right)$. Then $C f \in H^p\left(\mathbb{R}_{+}^n, \mathfrak{H}\right)$, and
$$
\lim _{z \rightarrow x, n.t} C f(z)=\frac{1}{2}\left(I+\bfe_0\mathcal{H} \right) f(x)
$$
for almost all $x \in \mathbb{R}^{n-1}$.

Conversely, if $1 \leq p<\infty$ and suppose $F \in H^p\left(\mathbb{R}_{+}^n, \mathfrak{H}\right)$. Then $F=C f$, where $f$ is the almost-everywhere non-tangential limit of $F$ given by Theorem \ref{thm:bdv}.
\end{theorem}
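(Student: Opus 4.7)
The theorem splits into two halves with complementary techniques: the forward direction is essentially singular-integral harmonic analysis on $\R^{n-1}$ via the explicit convolution formula already displayed in the setup, and the converse is a contour-integral argument invoking Cauchy's theorem for Clifford analytic functions on the upper half-space.

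\emph{Forward direction.} I would first check that $Cf$ is left Clifford analytic in $\R^n_+$ by differentiating under the integral sign, justified by dominated convergence since the Cauchy kernel $u \mapsto (u-z)|u-z|^{-n}\bfe_0$ is smooth in $z$ for $u \ne z$ and the Dirac operator annihilates it. To place $Cf$ in $H^p$, I would use the identity $Cf(\cdot, t) = P_t * g$ with $g = \tfrac{1}{2}(I+\bfe_0 \mathcal{H})f$ recorded in the excerpt. For $p > 1$, the scalar $L^p$ boundedness of the Riesz transforms $R_j$ extends componentwise to give $g \in L^p(\R^{n-1}, \mathfrak{H})$; Young's inequality together with $\|P_t\|_{L^1} = 1$ then yields $\|Cf(\cdot, t)\|_{L^p} \le \|g\|_{L^p}$ uniformly in $t > 0$, so $Cf \in H^p$. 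The non-tangential limit $Cf(z) \to g(x)$ almost everywhere follows from the standard approximation-of-identity property of the Poisson kernel $P_t$. For $p = 1$ the same argument works because the hypothesis $\mathcal H f \in L^1$ secures $g \in L^1$ directly, bypassing the failure of the Riesz transforms on $L^1$.

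\emph{Converse.} Suppose $F \in H^p(\R^n_+, \mathfrak{H})$ and let $f$ be the boundary value supplied by Theorem \ref{thm:bdv}. Fix $z = (x, t) \in \R^n_+$ and $0 < \epsilon < t$. I would apply the Cauchy integral formula for Clifford analytic functions on the truncated half-ball $\Omega_{R,\epsilon} = \{(y, s) : |(y,s)-(x,\epsilon)| < R, \; s > \epsilon\}$. The boundary splits into a flat part on $\{s = \epsilon\}$, which—after reconciling the outward normal $-\bfe_0$ with the sign conventions in \eqref{eq:Cauchy}—evaluates to $Cf_\epsilon(z)$ with $f_\epsilon(y) := F(y, \epsilon)$, and an upper hemisphere $S_R$. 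Granting that the $S_R$-integral vanishes along some sequence $R_k \to \infty$, one obtains $F(z) = Cf_\epsilon(z)$ for every small $\epsilon$. Letting $\epsilon \to 0$ and using $f_\epsilon \to f$ in $L^p$ from Theorem \ref{thm:bdv}(2), together with the fact that $u \mapsto (u-z)|u-z|^{-n}\bfe_0$ lies in $L^q(\R^{n-1})$ for the conjugate exponent at every fixed interior $z$, Hölder's inequality delivers $Cf_\epsilon(z) \to Cf(z)$, and the identity $F = Cf$ is established.

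\emph{Anticipated obstacle.} The essential difficulty is the vanishing of the hemispherical integral at infinity. An $H^p$ function controls only slice $L^p$ norms and need not decay pointwise, so a direct pointwise estimate on $S_R$ fails. I would exploit the subharmonicity of $|F|^p$ valid for $p \ge 1 > \tfrac{n-2}{n-1}$ (the same estimate that drives Theorem \ref{thm:bdv}) to control $F$ by a Poisson integral of a fixed majorant, and then run a Fubini-plus-pigeonhole argument in the radial coordinate to extract a sequence $R_k$ along which the hemispherical contribution tends to zero. This is the step where the range $p \ge 1$ is genuinely used and where the whole representation stands or falls.
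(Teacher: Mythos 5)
The paper does not prove this statement: it is quoted verbatim from Gilbert--Murray \cite[Theorem 5.16]{Gilbert}, so there is no in-paper argument to compare against, and I assess your plan on its own terms.

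Your outline is sound, but it is a genuinely different route from the standard one. The forward direction is essentially forced by the displayed identity $Cf(\cdot,t)=P_t*\tfrac{1}{2}(I+\bfe_0\mathcal{H})f$ and poses no surprises; the only point you should not wave past is the endpoint $p=1$, where the step $\sum_j\bfe_0\bfe_j\,(Q_t^{(j)}*f)=P_t*(\bfe_0\mathcal{H}f)$ must be justified under the hypothesis $\mathcal{H}f\in L^1$ alone, since the individual $R_jf$ need not be integrable; a Fourier-side computation handles this because only the aggregate $\sum_j\bfe_j R_jf$ enters. For the converse you take a contour route: Cauchy's theorem on a truncated half-ball, then kill the hemispherical term. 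Your anticipated obstacle is real but your proposed remedy works, and in fact subharmonicity is not even needed: Fubini in the radial variable gives $\int_{T}^{2T}\bigl(\int_{S_R}|F|^p\,dS\bigr)\,dR\le CT\|F\|_{H^p}^p$, so pigeonhole produces $R_k\to\infty$ with $\int_{S_{R_k}}|F|^p\,dS$ bounded, and combined with the pointwise kernel decay $|E(u-z)|\lesssim R^{1-n}$ and H\"older on $S_{R_k}$ the hemispherical contribution is $O(R_k^{-(n-1)/p})\to 0$. The more common (and likely Gilbert's) path avoids contours entirely: one first shows $F(\cdot,t)=P_t*f$ via the Poisson representation for slice-$L^p$ harmonic functions, then translates the equation $D F=0$ into the Fourier-side constraint $\hat f(\xi)=-i\,\bfe_0\frac{\sum_j\xi_j\bfe_j}{|\xi|}\hat f(\xi)$, i.e. $f=\bfe_0\mathcal{H}f$, whence $F=P_t*\tfrac{1}{2}(I+\bfe_0\mathcal{H})f=Cf$. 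Your contour approach is more geometric and self-contained but requires the vanishing-at-infinity control; the Poisson/Fourier approach leans on the harmonic $H^p$ representation theorem but removes the limiting argument in $R$. Both are valid; there is no gap in yours once the $p=1$ subordination identity and the pigeonhole estimate are spelled out.
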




Theorem \ref{thm:Hp} says, in effect, that, for $1<p<\infty, H^p\left(\mathbb{R}_{+}^n, \mathfrak{H}\right)$ is precisely the image of $L^p\left(\mathbb{R}^{n-1}, \mathfrak{H}\right)$ under the Cauchy integral operator, while $H^1\left(\mathbb{R}_{+}^n, \mathfrak{H}\right)$ is the image under $C$ of the set of all $f \in$ $L^1\left(\mathbb{R}^{n-1}, \mathfrak{H}\right)$ for which $\mathcal{H} f \in L^1\left(\mathbb{R}^{n-1}, \mathfrak{H}\right)$. Moreover, for $1 \leq p<\infty$, $f \in L^p\left(\mathbb{R}^{n-1}, \mathfrak{H}\right)$ arises as the non-tangential boundary value of an $H^p\left(\mathbb{R}_{+}^n, \mathfrak{H}\right)$-function if and only if $f=\bfe_0\mathcal{H}  f \in L^p\left(\mathbb{R}^{n-1}, \mathfrak{H}\right)$.

Thus for $p>1$, in the special case of $\mathfrak{H}=Cl_n, H^p\left(\mathbb{R}_{+}^n, Cl_n\right)$ is effectively isomorphic to $L^p\left(\mathbb{R}^{n-1}, Cl_{n-1}\right)$, while $H^1\left(\mathbb{R}_{+}^n Cl_n\right)$ is isomorphic to a subspace of $L^1\left(\mathbb{R}^{n-1}, Cl_{n-1}\right)$.

And in the special case of $\mathfrak{H}=Cl_n, H^p\left(\mathbb{R}_{+}^n, Cl_n\right)$ has a similar description like \eqref{eq：complex}:
\begin{equation}\label{eq：CLn}
           H^p(\mathbb{C}_+) =
           \begin{dcases}
           L^p(\R^{n-1}, Cl_{n-1}), & \quad p>1\\
           \{f\in L^1(\R^{n-1}, Cl_{n-1}): \bfe_0\mathcal{H}f\in L^1(\R, Cl_{n-1})\}, &\quad p=1.
           \end{dcases}
         \end{equation}



In general cases, Gilbert makes following conjecture in his book .
\begin{conjecture}\cite[P.140 Conjecture 7.23]{Gilbert}
  For every Clifford module $\mathcal{H}$ there exists $\eta \in \mathbb{R}^n$ with $\eta^2=-1$ and a subspace $\mathfrak{H}_0$ of $\mathfrak{H}$, such that
  \begin{enumerate}
    \item $\mathcal{H}$ admits the splitting $\mathfrak{H}=\mathfrak{H}_0+\eta\mathfrak{H}_0$,
    \item the Cauchy integral operator $C_M$ is an isomorphism from $L^p(\partial M, \mathfrak{H}_0 )$ onto $H^p(M,\mathfrak{H})$ for all $p>p_0$,
    \item if $Tan$ denotes the projection of $\mathfrak{H}$ onto$\mathcal{H}_0$, then the boundary operator mapping $F\to Tan(F^+)$ is continuous from $H^p(M,\mathfrak{H})$ onto $L^p(\partial M, \mathfrak{H}_0 )$ for all $p>p_0$.
  \end{enumerate}
\end{conjecture}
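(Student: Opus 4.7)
The plan is to reduce the general Clifford module case to the irreducible spinor case via the classification of Clifford modules, and then to exploit the projection structure of the Cauchy integral already used in Theorem~\ref{thm:Hp}. By the Atiyah--Bott--Shapiro classification, every $Cl_n$-module $\mathfrak{H}$ decomposes as a finite direct sum of irreducible spinor modules ($\mathcal{R}_n$, or $\mathcal{R}_n^\pm$ when $n$ is even), and the three conclusions of the conjecture are all compatible with such direct sums, so it suffices to treat the irreducible case. For $M=\mathbb{R}^n_+$ I would choose $\eta=\bfe_0$ (so $\eta^2=-1$), modelled on $Cl_n=Cl_{n-1}\oplus\bfe_0 Cl_{n-1}$, and take $\mathfrak{H}_0\subset\mathfrak{H}$ to be a $Cl_{n-1}$-submodule complementary to $\bfe_0\mathfrak{H}_0$. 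Its existence is elementary in spirit: left multiplication by $\bfe_0$ is an $\mathbb{R}$-linear automorphism of $\mathfrak{H}$ with square $-I$, giving $\mathfrak{H}$ the structure of a complex vector space under $\mathbb{R}[\bfe_0]\cong\mathbb{C}$, and the eight cases of Bott periodicity furnish explicit spinor realisations from which one can read off a real form that is simultaneously $Cl_{n-1}$-stable.

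For parts (2) and (3), set $P=\tfrac12(I+\bfe_0\mathcal{H})$. The anticommutation $\bfe_0\mathcal{H}=-\mathcal{H}\bfe_0$ together with the identity $\mathcal{H}^2=I$ (which follows from $\sum_jR_j^2=-I$ and the Clifford relations among $\bfe_1,\ldots,\bfe_{n-1}$) give $P^2=P$, and Theorem~\ref{thm:Hp} identifies the image of $P$ with the boundary traces of $H^p(\mathbb{R}^n_+,\mathfrak{H})$. For $f\in L^p(\mathbb{R}^{n-1},\mathfrak{H}_0)$, $Cl_{n-1}$-stability of $\mathfrak{H}_0$ forces $\mathcal{H}f\in L^p(\mathbb{R}^{n-1},\mathfrak{H}_0)$, hence $Pf=\tfrac12 f+\tfrac12\bfe_0\mathcal{H}f$ decomposes cleanly in $\mathfrak{H}_0+\bfe_0\mathfrak{H}_0$ with tangential part $\tfrac12 f$. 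This yields $\mathrm{Tan}\circ C_M=\tfrac12 I$ on $L^p(\mathbb{R}^{n-1},\mathfrak{H}_0)$, so $C_M$ is injective; conversely, if $F\in H^p$ has boundary trace $g=g_0+\bfe_0 g_1$ with $g_0,g_1\in L^p(\mathfrak{H}_0)$, the identity $g=\bfe_0\mathcal{H}g$ expands to $g_1=\mathcal{H}g_0$, giving $F=C_M(2g_0)$ and hence surjectivity. The continuity and surjectivity of the boundary map $F\mapsto\mathrm{Tan}(F^+)=g_0$ in (3) then follow from the $L^p$-boundedness of $\mathcal{H}$.

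To handle a general admissible $M$ (a Lipschitz or $C^1$ domain, with $p_0=p_0(M)$ the Coifman--McIntosh--Meyer threshold), I would replace the half-space Cauchy kernel by its analogue on $M$ and repeat the projection calculus. The two main obstacles are algebraic and analytic. Algebraically, producing the $Cl_{n-1}$-invariant real form $\mathfrak{H}_0$ uniformly across all Bott-periodicity types requires careful bookkeeping, since the complex structure induced by $\bfe_0$ need not commute with the ambient real/complex/quaternionic division structure of the spinor module. Analytically, the clean identity $(\bfe_0\mathcal{H})^2=I$ on a flat boundary has no literal analogue on curved $\partial M$: there $\eta$ is a fixed vector while the outward normal varies, so the square of the boundary Cauchy operator equals the identity only modulo a compact commutator, and one must combine the Clifford projection calculus with Fredholm theory on $L^p(\partial M,\mathfrak{H}_0)$ to recover the isomorphism. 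I expect this second point to be the main difficulty of the proof.
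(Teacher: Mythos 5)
Your half-space projection calculus is sound: granting a subspace $\mathfrak{H}_0$ with $\mathfrak{H}=\mathfrak{H}_0\oplus\bfe_0\mathfrak{H}_0$ that is stable under $\bfe_1,\ldots,\bfe_{n-1}$, the identities $(\bfe_0\mathcal{H})^2=I$ and $\mathcal{H}(L^p(\mathbb{R}^{n-1},\mathfrak{H}_0))\subset L^p(\mathbb{R}^{n-1},\mathfrak{H}_0)$ do yield $\mathrm{Tan}\circ C=\tfrac12 I$ and the surjectivity argument via $g=g_0+\bfe_0\mathcal{H}g_0$, exactly as in the paper's computation. The genuine gap is the step you yourself flag as ``careful bookkeeping'': the existence, for every $n$ and every irreducible spinor module, of a real form $\mathfrak{H}_0$ that is simultaneously $Cl_{n-1}$-stable and transverse to $\bfe_0\mathfrak{H}_0$ is not something one can simply ``read off'' from the eight Bott-periodicity types --- it is precisely the algebraic heart of Gilbert's conjecture, and no construction is given. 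The paper does not attempt this in general either: it proves the conjecture only for $n=8$ and $M=\mathbb{R}^8_+$, by producing the splitting explicitly through the octonionic realization $Cl_8\cong \mathrm{End}_{\mathbb{R}}(\mathbb{O}\oplus\mathbb{O})$, $\mathcal{R}_8=\mathbb{O}\oplus\mathbb{O}$, taking $\eta=\bfe_0$ and $\mathfrak{H}_0=\{(p,p):p\in\mathbb{O}\}$, verifying stability and the boundary identities by direct computation with left-multiplication operators $L_{\bfe_j}$, and then passing to arbitrary Clifford modules by the simplicity of $Cl_8$ (every module is a direct sum of copies of $\mathcal{R}_8$). So where your plan needs a uniform algebraic input that is still missing, the paper substitutes a concrete dimension-specific construction.

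The second gap is the general-domain part. On a Lipschitz or even $C^1$ boundary the operator $\bfe_0\mathcal{H}$ is replaced by a Cauchy singular integral whose square is not the identity, and your proposal to treat the discrepancy as ``a compact commutator'' plus Fredholm theory is a hope rather than an argument: on Lipschitz surfaces the relevant commutators are in general not compact, and Fredholm index information alone would not give the claimed isomorphism onto $H^p(M,\mathfrak{H})$. The paper avoids this entirely by restricting to the flat boundary $\partial\mathbb{R}^8_+=\mathbb{R}^7$, where Theorems \ref{thm:bdv} and \ref{thm:Hp} supply all the analysis. In short: your reduction to irreducibles and your flat-boundary calculus match the paper, but both of the steps you defer --- the uniform construction of $\mathfrak{H}_0$ for all $n$, and the curved-boundary analysis --- are exactly the unproven content, so the proposal is a program rather than a proof of the statement even in the case the paper settles.
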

 We will give a affirmative answer in the case  Hardy spaces of Clifford analytic functions in upper half-space of $\R^8$.
 \begin{theorem}\label{thm:main1}
   For every Clifford module $\mathfrak{H}$ there exists $\eta \in \mathbb{R}^8$ with $\eta^2=-1$ and a subspace $\mathfrak{H}_0$ of $\mathfrak{H}$, such that
  \begin{enumerate}
    \item $\mathfrak{H}$ admits the splitting $\mathfrak{H}=\mathfrak{H}_0\oplus \eta\mathfrak{H}_0$,
    \item the Cauchy integral operator $C$ is an isomorphism from $L^p(\R^7, \mathfrak{H}_0 )$ onto $H^p(\R^8_+,\mathfrak{H})$ for all $p>1$,
    \item if $Tan$ denotes the projection of $\mathfrak{H}$ onto $\mathfrak{H}_0$, then the boundary operator mapping $F\to Tan(F^+)$ is continuous from $H^p(\R^8_+,\mathfrak{H})$ onto $L^p(\R^7, \mathfrak{H}_0 )$ for all $p>1$.
   \end{enumerate}
 \end{theorem}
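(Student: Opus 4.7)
My proposed proof has three stages matching the three conclusions of the theorem, all hinging on the octonionic realisation of $Cl_8$ introduced earlier in the paper. The overarching idea is to reduce to the spinor module $\mathcal{R}_8$, produce the splitting inside $\mathcal{R}_8$ by hand using octonions, and then read off parts (2)--(3) from Theorems \ref{thm:bdv} and \ref{thm:Hp}.

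\textbf{Stage 1 (the splitting).} Since $Cl_8 \cong \R(16)$ is simple, every Clifford module $\mathfrak{H}$ is a direct sum of copies of the unique irreducible spinor module $\mathcal{R}_8 \cong \R^{16}$, so it suffices to treat $\mathcal{R}_8$ and then sum. Using the octonionic model, I identify $\mathcal{R}_8$ with $\O \oplus \O$ in such a way that each spatial generator $\bfe_i$ ($1\le i\le 7$) preserves the two $\O$-summands separately, restricting to the two half-spinor representations of $Cl_7$ via left octonionic multiplication (with a sign flip on one copy forced by alternativity), while $\bfe_0$ interchanges the two summands via the sign-twisted swap $(x,y)\mapsto(-y,x)$. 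Taking $\eta := \bfe_0$ (so $\eta^2 = -1$) and $\mathfrak{H}_0$ the first $\O$-summand (extended diagonally over the multiplicity of $\mathfrak{H}$) gives the direct-sum decomposition $\mathfrak{H} = \mathfrak{H}_0 \oplus \eta\mathfrak{H}_0$, with $\mathfrak{H}_0$ a $Cl_7$-submodule of $\mathfrak{H}$.

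\textbf{Stage 2 (Cauchy integral isomorphism).} For $f \in L^p(\R^7,\mathfrak{H}_0)$, Theorem \ref{thm:Hp} places $Cf$ in $H^p(\R^8_+,\mathfrak{H})$ with non-tangential boundary value $\tfrac{1}{2}(f + \bfe_0\mathcal{H} f)$. Because every $\bfe_j$ preserves $\mathfrak{H}_0$, the operator $\mathcal{H} = \sum_{j=1}^{7}\bfe_j R_j$ maps $L^p(\R^7,\mathfrak{H}_0)$ into itself, so $\bfe_0\mathcal{H} f$ lies in $L^p(\R^7,\eta\mathfrak{H}_0)$; applying $Tan$ returns $\tfrac{1}{2}f$ and gives both injectivity of $C$ and the bound $\|f\|_p \lesssim \|Cf\|_{H^p}$. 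For surjectivity, take $F \in H^p$ with boundary value $g = g_0 + \eta g_1$ (Theorem \ref{thm:bdv}); the converse part of Theorem \ref{thm:Hp} yields $F = Cg$ together with $g = \bfe_0\mathcal{H} g$. Using the anticommutation $\bfe_j\eta = -\eta\bfe_j$ and the scalar nature of the $R_j$, one computes $\mathcal{H}(\eta h) = -\eta\mathcal{H}(h)$, hence $\bfe_0\mathcal{H}(g_0+\eta g_1) = \mathcal{H} g_1 + \eta\mathcal{H} g_0$; matching components forces $g_0 = \mathcal{H} g_1$ and $g_1 = \mathcal{H} g_0$, so setting $f := 2g_0 \in L^p(\R^7,\mathfrak{H}_0)$ yields $(Cf)^+ = g_0 + \eta\mathcal{H} g_0 = g$ and therefore $Cf = F$ by uniqueness. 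The open mapping theorem then upgrades $C$ to a topological isomorphism.

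\textbf{Stage 3 and the main obstacle.} Part (3) is almost immediate: $F\mapsto F^+$ is bounded from $H^p(\R^8_+,\mathfrak{H})$ to $L^p(\R^7,\mathfrak{H})$ by Theorem \ref{thm:bdv} and $Tan$ is a bounded $\R$-linear projection, so $F\mapsto Tan(F^+)$ is continuous, while surjectivity onto $L^p(\R^7,\mathfrak{H}_0)$ follows from Stage 2 because $f = Tan((C(2f))^+)$ for every $f$. The one genuinely non-routine step is Stage 1: for an abstract $Cl_8$-module there is no canonical $Cl_7$-invariant complement to $\eta\mathfrak{H}_0$, and the normed division algebra structure of $\O$ is what supplies one. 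This is precisely why the conjecture can be settled at $n=8$ by an explicit construction, in contrast to higher $n$ where no such division algebra is available; once the octonionic model is in hand, Stages 2 and 3 reduce to bookkeeping against Theorems \ref{thm:bdv} and \ref{thm:Hp}.
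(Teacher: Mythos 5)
Your argument is correct and rests on the same underlying idea as the paper: reduce to the irreducible module $\mathcal{R}_8\cong\O\oplus\O$ via simplicity of $Cl_8$, build a splitting from the octonionic structure, and read parts (2)--(3) off Theorems \ref{thm:bdv} and \ref{thm:Hp}. The one substantive difference is in Stage 1. The paper uses the realization $A$ from Theorem \ref{realization}, in which \emph{every} generator $A(\bfe_j)$ swaps the two $\O$-summands (since $A(q)=\bigl(\begin{smallmatrix}0 & L_q\\ -L_{\bar q} & 0\end{smallmatrix}\bigr)$), so neither summand is a $Cl_7$-submodule; the paper therefore takes $\mathfrak{H}_0$ to be the diagonal $\O\bigl(\begin{smallmatrix}1\\1\end{smallmatrix}\bigr)$ and works out the action of $\bfe_0\mathcal{H}$ directly. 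You instead posit a realization in which $\bfe_j\mapsto\mathrm{diag}(L_{\bfe_j},-L_{\bfe_j})$ for $1\le j\le 7$ and $\bfe_0$ is the skew swap, and take $\mathfrak{H}_0$ to be the first summand. This is not the realization of Theorem \ref{realization}, and you assert it rather than verify it; you should check the Clifford relations (they do hold: the diagonal blocks give $L_{\bfe_j}L_{\bfe_k}+L_{\bfe_k}L_{\bfe_j}=-2\delta_{jk}$ by the paper's alternativity lemma, and the swap anticommutes with each diagonal generator). In fact your realization is exactly the conjugate of $A$ by the Hadamard-type change of basis $\frac{1}{\sqrt2}\bigl(\begin{smallmatrix}1&1\\1&-1\end{smallmatrix}\bigr)$, under which the paper's diagonal $\mathfrak{H}_0$ becomes your first-summand $\mathfrak{H}_0$, so the two computations are the same modulo a basis change. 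Your version has the pedagogical advantage of exhibiting $\mathfrak{H}_0$ as a genuine $Cl_7$-submodule (making ``$\mathcal{H}$ preserves $\mathfrak{H}_0$'' immediate); the paper's version avoids introducing a second realization. Stages 2 and 3 are then correct bookkeeping against Theorems \ref{thm:bdv} and \ref{thm:Hp}, matching the paper's calculations up to the basis change.
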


\subsection{Octonionic Hardy space}
When we review the material we used in Clifford case, the Cauchy integral involves $\bfe_0\bfe_j$ only, that means we can consider the Spin(8) module or $Cl_{7}$ module. It's  known the octonion algebra can be realized as a $Cl_{7}$ module\cite{Harvey90,HLR21}.

What will happen if we take Cauchy integral of a octonion-valued function ?

Under the natural question and with the help of  the previous progresses, we find a way from Clifford analysis to octonion analytic function theory. And octonionic Hardy space theory can be built easily,  in which case   Theorem \ref{thm:bdv} and Theorem \ref{thm:Hp} still holds. Thus , it's very natural to ask the same question  to octonion case, but  we have the following totally different answer .

\begin{theorem}\label{thm:main2}
  There don't exist  two proper subspaces $\mathfrak{H}_0, \mathfrak{H}_1$ of $\O$, satisfy the following all three statements:
  \begin{enumerate}
    \item $\O$ admits the splitting $\mathfrak{H}=\mathfrak{H}_0\oplus \mathfrak{H}_1$,
    \item the Cauchy integral operator $C_{\O}$ is an isomorphism from $L^p(\R^7, \mathfrak{H}_0 )$ onto $H^p(\R^8_+,\O)$ for all $p>1$,
    \item if $Tan$ denotes the projection of $\O$ onto$\mathfrak{H}_0$, then the boundary operator mapping $F\to Tan(F^+)$ is continuous from $H^p(\R^8_+,\O)$ onto $L^p(\R^7, \mathfrak{H}_0 )$ for all $p>1$.
   \end{enumerate}
 \end{theorem}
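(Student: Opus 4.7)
I would argue by contradiction: assume proper subspaces $\mathfrak{H}_0,\mathfrak{H}_1\subset\O$ satisfy (1)--(3). Since the paper's octonionic version of Theorem \ref{thm:Hp} identifies $H^p(\R^8_+,\O)$ with the image of the boundary Hardy projection $P=\tfrac{1}{2}(I+\bfe_0\mathcal{H})$ on $L^p(\R^7,\O)$ --- and a short Moufang-identity calculation ($a(b(ac))=(aba)c$) gives $(\bfe_0\mathcal{H})^2=I$, so that $P$ really is a projection --- I would pass to the Fourier side. The symbol of $P$ on $\C\otimes\O$ at $\omega=\xi/|\xi|\in S^6$ is
$$\hat{P}(\omega)=\tfrac{1}{2}\bigl(I-iM(\omega)\bigr),\qquad M(\omega)v=\sum_{j=1}^{7}\omega_j\,\bfe_0(\bfe_j v),$$
where the parenthesisation of $\bfe_0(\bfe_j v)$ is essential because $\O$ is non-associative; one still has $M(\omega)^2=-I$. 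Via Plancherel at $p=2$ together with $L^p$-boundedness of Riesz transforms, condition (2) forces the pointwise restriction $\hat{P}(\omega)|_{\C\otimes\mathfrak{H}_0}\colon\C\otimes\mathfrak{H}_0\to\operatorname{Image}\hat{P}(\omega)$ to be a $\C$-linear bijection for every $\omega\in S^6$, uniformly in $\omega$.

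\textbf{Transversality and the Clifford benchmark.} Dimension counting forces $\dim_\R\mathfrak{H}_0=\dim_\C\operatorname{Image}\hat{P}(\omega)=4$, so bijectivity is equivalent to the uniform transversality
$$(\C\otimes\mathfrak{H}_0)\cap\ker\hat{P}(\omega)=\{0\}\quad\text{for every }\omega\in S^6.$$
If $\O$ were associative, the identity $L_{\bfe_0}L_{\bfe_j}=L_{\bfe_0\bfe_j}$ would give $M(\omega)=L_{\bfe_0\eta(\omega)}$ with $\eta(\omega)=\sum_j\omega_j\bfe_j$, so the family $\{\ker\hat{P}(\omega)\}_{\omega\in S^6}$ would consist of the $(-i)$-eigenspaces of a single one-parameter family of left multiplications by imaginary units. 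Such a family admits a common $4$-dimensional complement, which is exactly the mechanism behind Theorem \ref{thm:main1}.

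\textbf{Non-associativity obstruction.} In $\O$ one has $L_{\bfe_0}L_{\bfe_j}\neq L_{\bfe_0\bfe_j}$ whenever the triple $(\bfe_0,\bfe_j,\cdot)$ picks up a non-trivial associator (i.e.\ escapes any quaternion subalgebra). Using the identity $\bfe_i(\bfe_j v)=(\bfe_i\bfe_j)v-[\bfe_i,\bfe_j,v]$ together with the Fano multiplication table, I would decompose
$$M(\omega)=L_{\bfe_0\eta(\omega)}+N(\omega),$$
where $N(\omega)$ is an associator-valued correction that genuinely varies with $\omega$. Two complex structures $M(\omega_1),M(\omega_2)$ at distinct directions no longer share a common splitting of $\C\otimes\O$, and a spectral / case-by-case analysis on the Fano plane shows that as $\omega$ traces $S^6$ the $4$-planes $\ker\hat{P}(\omega)$ sweep out enough of $\C\otimes\O$ that no fixed $4$-plane $\C\otimes\mathfrak{H}_0$ can be transverse to all of them. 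Producing even one $\omega$ with $0\ne v\in(\C\otimes\mathfrak{H}_0)\cap\ker\hat{P}(\omega)$ contradicts the uniform transversality, completing the argument.

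\textbf{Main obstacle.} The technical heart is ruling out \emph{every} candidate $\mathfrak{H}_0$, not merely Clifford-style splittings $\O=\mathfrak{H}_0\oplus\eta\mathfrak{H}_0$. I expect the cleanest route to combine a careful Fano-plane-based calculation of the associator correction $N(\omega)$ with a genericity count in the complex Grassmannian $\mathrm{Gr}_\C(4,\C\otimes\O)$: one must prove that the algebraic variety swept out by $\{\ker\hat{P}(\omega):\omega\in S^6\}$ meets every $4$-dimensional $\C$-subspace of $\C\otimes\O$ non-trivially. Turning the qualitative failure of associativity into this quantitative geometric statement is where the real work lies.
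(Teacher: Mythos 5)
Your approach is genuinely different from the paper's, but it has a structural confusion at its heart and is, by your own admission, incomplete.

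\textbf{Where the approaches differ.} The paper works entirely in physical space: it chooses a real Schwartz function $f$ with $R_jf(0)=\delta_{j1}$, feeds $g=f\xi_1$ through the Cauchy operator, and deduces (using the assumed identity $\mathrm{Tan}\!\left(C_\O(g)^+\right)=g$) that $\bfe_j\xi_1\in\mathfrak{H}_1$ for every $j=1,\dots,7$. Since $\{\bfe_j\xi_1\}_{j=1}^7$ is orthonormal this forces $\dim_\R\mathfrak{H}_1\geq 7$, hence $\dim_\R\mathfrak{H}_0=1$. The contradiction is then produced by exhibiting a concrete boundary Hardy datum $F^+ = g\bfe_1\mathbf{p}+\mathcal{H}_{\O}(g\bfe_1\mathbf{p})$ whose components do not satisfy the forced Stein--Weiss-type relation $f_j=-R_j f_0$. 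You instead pass to the Fourier side, argue that pointwise bijectivity of $\hat P(\omega)|_{\C\otimes\mathfrak{H}_0}$ forces $\dim_\R\mathfrak{H}_0=4$, and try to rule out every $4$-plane by a transversality/genericity argument in the Grassmannian. Note, strikingly, that your dimension count ($4$) and the paper's ($1$) disagree: if both were valid consequences of the hypotheses, you would already be done with no Grassmannian geometry at all. You do not observe or exploit this.

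\textbf{The genuine gap.} Your ``non-associativity obstruction'' is built on an identity that is vacuous in the octonionic setting. In $\O$ one has $\bfe_0=1$, so $\bfe_0(\bfe_j v)=\bfe_j v$ with no parenthesisation issue, $L_{\bfe_0}L_{\bfe_j}=L_{\bfe_j}=L_{\bfe_0\bfe_j}$ trivially, and the associator $[\bfe_0,\bfe_j,v]=[1,\bfe_j,v]=0$. Consequently your proposed decomposition $M(\omega)=L_{\bfe_0\eta(\omega)}+N(\omega)$ has $N(\omega)\equiv 0$: the symbol is \emph{exactly} $M(\omega)=L_{\eta(\omega)}$, and the relation $M(\omega)^2=-I$ follows from alternativity alone (the Moufang identity is not needed). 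The genuine contrast with the Clifford case is not an associator correction to $M(\omega)$ but the structure of the module: in Theorem \ref{thm:main1} the multiplier is the block-diagonal operator $L_\eta\oplus(-L_\eta)$ on $\O\oplus\O$, which supplies the common complement, whereas on the unpaired copy $\O$ no such pairing is available. Because your putative associator term $N(\omega)$ vanishes, the ``case-by-case analysis on the Fano plane'' you allude to cannot produce the variation of $\ker\hat P(\omega)$ you need, and you explicitly concede that the remaining Grassmannian step is ``where the real work lies.'' As it stands the argument has no route to the contradiction; the concrete test-function construction the paper uses is what actually closes the proof.
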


Our paper is organized as follows. Section 2 is devoted to recalling some basics for Clifford algebras and octonion algebra. In Section 3, the most important realization of $Cl_8$ and spinor space $\mathcal{R}_8$ are given, which connects octonion algebra and $\mathcal{R}_8$ closely . A new way from Clifford analysis to Octonionic analytic functions theory has been introduced in Section 4, and octonionic Hardy space in upper half space has been built by Clifford Hardy spaces.
And at last section, we will give the proofs of Theorem \ref{thm:main1} and Theorem \ref{thm:main2}.

\section{Preliminaries}
\subsection{Some basic properties of Universal Clifford algebra $Cl_n$ over $\mathbb{R}^n$}

We don't want to discuss the properties of universal Clifford algebra $Cl_n$ too much here, but the following notions and conventions are needed.
For a detailed discussion on Clifford algebra, we refer the readers to \cite{Atiyah,Gilbert,F.S,BDS82}.

\begin{definition}
  Let $\mathbb{A}$ be a associative algebra over $\mathbb{R}$ with identity 1,$v:\R^n\rightarrow \A $ is a $\R$-linear embedding.The pair $(\A,v)$ is said to be a universal Clifford algebra $Cl_n$ over $\R^n$,if:
  \begin{enumerate}[label=(\arabic*)]
    \item $A$ is generated as an algebra by $\{v(x):x\in \R^n\}$ and $\{\lambda1,\lambda\in\R\}$,
    \item $(v(x))^2=-\abs{x}^2,\forall x\in \R^n$
    \item $dim_{\R}\A=2^n$
  \end{enumerate}
\end{definition}

\textbf{Some notations and conventions}
\begin{itemize}
 \item\label{g_i} Let $\{\bfe_{i-1}=(\underbrace{0,\cdots,0}_{i-1},1,0,\cdots,0),i=1,\cdots,n\}$ be the orthogonal normalized basis of $\R^n$, and we don't distinguish $\bfe_i\in \mathbb{R}^n$ and $v(\bfe_i)\in Cl_n$ ;
  \item $\mathbf{x}=\sum_{i=0}^{n-1}x_i\bfe_i \in \R^n,\abs{x}^2=\sum_{i=0}^{n-1}x_i^2$;

  \item Let$\mathcal{P}(n)$ be the set of subset of $\{0,\cdots,n-1\}$, $\forall \alpha\in \P ,\alpha\neq \emptyset,\alpha=\{\alpha_1\cdots,\alpha_k\},0\le \alpha_1<\cdots<\alpha_k\le n-1$, denote $\bfe_{\alpha}=\bfe_{\alpha_1}\cdots \bfe_{\alpha_k}$, and $\bfe_{\emptyset}=1$.
  \item $Cl_n$ is $\R$ linearly generated  by $\{\bfe_\alpha,\alpha\in \P\}$;
    \item $\bfe_i\bfe_j+\bfe_j\bfe_i=-2\delta_{ij},\forall i,j=0,\cdots,n-1$.
    \item $x=\sum_{A}x_A \bfe_A \in Cl_n$, the Clifford conjugation $x^\star=\sum_{A}x_A \bfe_A^\star $, where $\bfe_A^\star=(-1)^{\frac{|A|(|A|+1)}{2}}\bfe_A.$
\end{itemize}

\begin{definition}\label{def:CM}
  A finite dimensional real Hilbert space $\mathcal{H}$ is said to  be $Cl_n$ module when there exist skew-adjoint real linear operator $T_1,\cdots, T_n$ such that
  $$T_jT_k+T_kT_j=-2\delta_{jk}id,\quad \quad (1\le j,k \le n).$$
\end{definition}

 Among all $Cl_n$ module, there is a class of most important  $Cl_n$ module named real spinor space $\mathcal{R}_n$. When $n\neq 4l+3$, it can be characterized as $Cl_n=End_{\R}(\mathcal{R}_n)$\cite[P.59 (7.42)]{Gilbert}, where $End_{\R}(\mathcal{R}_n)$ is the real linear operator algebra from $\mathcal{R}_n$ to $\mathcal{R}_n$.
 \subsection{Octonion algebra}

The  algebra of   octonions   $\O$  is  a non-commutative, non-associative,  normed 8 dimensional division algebra over  $\R$. Comparing to the Clifford algebra, the following notions and conventions are needed. We refer to \cite{Baez,CS03} for a detailed discussion on octonion algebra.

\textbf{Some notations and conventions again}
\begin{itemize}
\item Let     $\mathbf{e_0}=1,\bfe_1 \ldots,\mathbf{e_7}$ be its natural basis,  we have $$\mathbf{e_i}\mathbf{e_j}+\mathbf{e_j}\mathbf{e_i}=-2\delta_{ij},\quad i,j=1,\ldots,7.$$Noticed that in $\R^8$, we can have three different means of $\bfe_i$, a vector in $\R^8$, a Clifford number in $Cl_8$, and a octonion number in $\mathbb{O}$, we don't distinguish them if they don't make confuses.
\item $x=x_0+\sum_{i=1}^7x_i\mathbf{e_i}\in \O,\quad x_i\in\R$ we define octonion conjugation by $\overline{x}=x_0-\sum_{i=1}^7x_i\mathbf{e_i} $, and real part operator $Re x=x_0$, $\O$ is a 8-dimensional Euclidean space, under inner product $(p,q)=Re(p\bar{q}), \quad p,q\in \O$.
\item We shall use the associator, defined by
$$[a, b, c]=(ab)c-a(bc).$$
It is well-known that the associator of octonion is alternative, i.e.,
$$[a, b, c]=-[a, c, b]=-[b,a,c]=-[\overline{a}, b, c].$$

\item The full multiplication table is conveniently encoded in  the Fano plane (see Figure 1 and \cite{Baez}).
In the Fano plane, the vertices are labeled by  $\mathbf{e_1},\ldots, \mathbf{e_7}$.
Each of the 7 oriented lines gives a quaternionic triple. The
product of any two imaginary units is given by the third unit on the unique line
connecting them, with the sign determined by the relative orientation.

\begin{figure}[ht]
\centering
  \includegraphics[width=6cm]{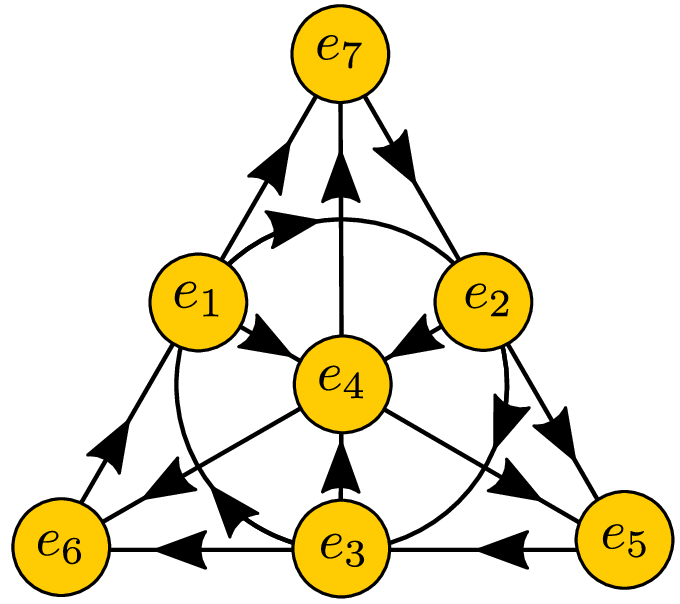}
  \caption{Fano plane}
  \label{fig:1}
\end{figure}

\end{itemize}

%
%

\section{An explicit realization of Clifford algebra $Cl_8$ and the spinor space $\mathcal{R}_8$}
Let $V=\R^8$ be the real $8$ dimensional Euclidean space. We give a embedding  $A:V\to End_{\R}(\mathbb{O}\oplus \mathbb{O})$ by
$$A(q)= \left(\begin{matrix}
 0 & L_q\\
 -L_{\bar{q}} & 0
\end{matrix} \right).
$$
Where the $L_q\in End_{\R}(\mathbb{O})$ is the left multiply operator defined by $$L_q:\mathbb{O}\to \mathbb{O}, L_q(p)=qp,$$ for any $p\in \mathbb{O}$.

\begin{lemma}
	$\forall p,q \in \mathbb{O}$, we have
	$$L_pL_{\bar{q}}+L_qL_{\bar{p}}=2(p,q)id.$$
\end{lemma}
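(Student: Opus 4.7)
The plan is to unfold the definition of the left multiplication operators and reduce the operator identity to the pointwise statement
\begin{equation*}
  p(\bar{q}\,x) + q(\bar{p}\,x) \;=\; 2(p,q)\,x \qquad \text{for all } p,q,x \in \mathbb{O}.
\end{equation*}
Viewed as an operator on $x$, the left-hand side is $\mathbb{R}$-bilinear and symmetric in $(p,q)$, so polarization reduces the task to proving the diagonal identity $p(\bar{p}\,x) = |p|^2\,x$, i.e.\ $L_p L_{\bar p} = |p|^2\,\mathrm{id}$. The general statement then follows from the quadratic-form identity $|p+q|^2 - |p|^2 - |q|^2 = 2\,\mathrm{Re}(p\bar{q}) = 2(p,q)$.

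For the diagonal case I would write $\bar{p} = 2\,\mathrm{Re}(p) - p$ and invoke the left alternative law $p(px) = p^2 x$, which holds in $\mathbb{O}$ as an instance of alternativity (it is one of the defining relations). Substituting and using that $2\,\mathrm{Re}(p)$ is a real scalar,
\begin{equation*}
  p(\bar{p}\,x) \;=\; 2\,\mathrm{Re}(p)\,(px) - p(px) \;=\; \bigl(2\,\mathrm{Re}(p)\,p - p^2\bigr)x \;=\; (p\bar{p})\,x \;=\; |p|^2\,x,
\end{equation*}
since $p\bar{p}=|p|^2$ is real. Thus $L_p L_{\bar p} = |p|^2\,\mathrm{id}$, and polarizing in $p$ produces $L_p L_{\bar q} + L_q L_{\bar p} = 2(p,q)\,\mathrm{id}$ as required.

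The only delicate ingredient is the appeal to the left alternative law: it is the one step where we manipulate a product of three octonions and must not rely on associativity. Every other move in the argument is formal manipulation of $\mathbb{R}$-linear operators (symmetry and bilinearity in $(p,q)$, the standard polarization identity, and the real inner-product formula $(p,q)=\mathrm{Re}(p\bar q)$). I therefore do not anticipate a genuine obstacle beyond being careful to isolate non-associativity to the single line above; an equally clean alternative, should one prefer, would be to rewrite $p(\bar q x) = (p\bar q)x - [p,\bar q,x]$ and verify $[p,\bar q,x] + [q,\bar p,x]=0$ directly from the alternating symmetry of the associator and the identity $[\bar a,b,c]=-[a,b,c]$ recorded in the Preliminaries.
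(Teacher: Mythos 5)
Your proof is correct, but your primary argument takes a different route from the paper's, which is in fact the ``alternative'' you sketch in your final sentence: the paper writes $p(\bar q z)+q(\bar p z)=(p\bar q)z-[p,\bar q,z]+(q\bar p)z-[q,\bar p,z]$, cancels the two associator terms using the alternating symmetries $[a,b,c]=-[b,a,c]=-[\overline a,b,c]$ recorded in the preliminaries, and concludes from $p\bar q+q\bar p=2\,\mathrm{Re}(p\bar q)=2(p,q)$. Your main argument instead polarizes the symmetric operator-valued bilinear form $(p,q)\mapsto L_pL_{\bar q}+L_qL_{\bar p}$ down to the diagonal identity $L_pL_{\bar p}=|p|^2\,\mathrm{id}$, and verifies the diagonal by writing $\bar p=2\,\mathrm{Re}(p)-p$ and invoking left alternativity $p(px)=p^2x$; that computation checks out. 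This is a clean separation: non-associativity enters exactly once, through the left alternative law, and every other step is linear algebra plus the quadratic-form identity $|p+q|^2-|p|^2-|q|^2=2(p,q)$. The paper's computation is shorter and reuses the associator symmetries it has already set up for later use; your version is slightly longer but makes explicit that the lemma holds in any alternative composition algebra, not just $\mathbb{O}$. Both arguments are sound.
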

\begin{proof}
	This simple fact just follows that for any octonion $z$, we have
	\begin{align*}
		&\left(L_pL_{\bar{q}}+L_qL_{\bar{p}}\right)(z)=p(\bar{q}z)+q(\bar{p}z)\\
		=&-[p, \bar{q}, z]+(p\bar{q})z-[q, \bar{p}, z]+(q\bar{p})z=(p\bar{q}+q\bar{p})z\\
=&2Re(p\overline{q})=2(p,q)z.
	\end{align*}
\end{proof}

\begin{theorem}[\cite{Harvey90}]\label{realization}
The map $A:V\to End_{\R}(\mathbb{O}\oplus \mathbb{O})$ gives a realization of Clifford algebra $Cl_8$ , thus $\mathbb{O}\oplus \mathbb{O}$ can be considered as the real spinor space $\mathcal{R}_8$.
\end{theorem}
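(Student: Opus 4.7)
My plan is the standard universal-property-plus-dimension argument: verify the Clifford relation $A(q)^2 = -|q|^2\,\mathrm{id}$ for $q \in V = \R^8 \cong \O$, extend by universality to an $\R$-algebra homomorphism $\widetilde{A}: Cl_8 \to End_{\R}(\O \oplus \O)$, and then upgrade this to an isomorphism by a dimension count combined with simplicity of $Cl_8$.

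For the first step, I would simply compute the block square
$$A(q)^2 = \begin{pmatrix} 0 & L_q \\ -L_{\bar q} & 0 \end{pmatrix}^2 = \begin{pmatrix} -L_q L_{\bar q} & 0 \\ 0 & -L_{\bar q} L_q \end{pmatrix}.$$
Applying the lemma just proved with $p = q$ gives $2 L_q L_{\bar q} = 2(q, q)\,\mathrm{id}$, so $L_q L_{\bar q} = |q|^2\,\mathrm{id}$; applying the same lemma with $p$ and $q$ both replaced by $\bar q$ (and using $\overline{\bar q} = q$, $|\bar q| = |q|$) gives $L_{\bar q} L_q = |q|^2\,\mathrm{id}$. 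Hence $A(q)^2 = -|q|^2\,\mathrm{id}$, which is exactly the defining relation of $Cl_8$ on $V$, and therefore $A$ extends uniquely to an $\R$-algebra homomorphism $\widetilde{A}: Cl_8 \to End_{\R}(\O \oplus \O)$.

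Both sides have real dimension $256 = 2^8 = 16^2$. Since $Cl_8 \cong M_{16}(\R)$ is a simple algebra, the kernel of the nonzero homomorphism $\widetilde{A}$ is a two-sided ideal that must be trivial, so $\widetilde{A}$ is injective; dimension equality then forces it to be an isomorphism. For the spinor identification, I would combine this with the fact already cited in the preliminaries that $Cl_n = End_{\R}(\mathcal{R}_n)$ when $n \neq 4\ell + 3$; for $n=8$ this gives $End_{\R}(\O \oplus \O) \cong Cl_8 \cong End_{\R}(\mathcal{R}_8)$, and since $M_{16}(\R)$ admits a unique irreducible module of dimension $16$ up to isomorphism, the $Cl_8$-module $\O \oplus \O$ is isomorphic to $\mathcal{R}_8$.

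The only non-routine obstacle is the first step, because the non-associativity of $\O$ prevents the naive identity ``$L_q L_{\bar q} = L_{q \bar q}$'' from holding; everything else is formal. This is precisely why the preceding lemma, proved via the alternativity of the octonion associator, is essential: it packages the exact amount of ``pseudo-multiplicativity'' of left multiplications that one needs to recover the Clifford quadratic condition.
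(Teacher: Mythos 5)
Your proof is correct and follows essentially the same route as the paper's: verify the Clifford relation on $V$ via the lemma on $L_pL_{\bar q}+L_qL_{\bar p}$ (the paper checks the polarized form $A(p)A(q)+A(q)A(p)=-2(p,q)\mathrm{id}$, you check the equivalent diagonal form $A(q)^2=-|q|^2\mathrm{id}$), then conclude by a $256=256$ dimension count. The only cosmetic difference is the bookkeeping of the dimension argument: the paper appeals to the fact that any Clifford algebra over an even-dimensional space is already the universal one, forcing the generated subalgebra of $End_{\R}(\O\oplus\O)$ to be all of it, whereas you get injectivity from the simplicity of $Cl_8\cong M_{16}(\R)$ and surjectivity from equal dimensions; both are standard and equally valid, and your final identification of $\O\oplus\O$ with $\mathcal{R}_8$ via uniqueness of the irreducible $M_{16}(\R)$-module makes explicit a step the paper leaves implicit.
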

\begin{proof}
	Actually we have
	\begin{align*}
		A(p)A(q)+A(q)A(p)=&\left(\begin{matrix}
 0 & L_p\\
 -L_{\bar{p}} & 0
\end{matrix} \right)\left(\begin{matrix}
 0 & L_q\\
 -L_{\bar{q}} & 0
\end{matrix} \right)+\left(\begin{matrix}
 0 & L_q\\
 -L_{\bar{q}} & 0
\end{matrix} \right)\left(\begin{matrix}
 0 & L_p\\
 -L_{\bar{p}} & 0
\end{matrix} \right)\\
=& \left(\begin{matrix}
 -(L_pL_{\bar{q}}+L_qL_{\bar{p}}) & 0\\
 0 & -(L_{\bar{p}}L_{q}+L_{\bar{q}}L_{p})
\end{matrix} \right)\\
=&-2(p,q)id.
	\end{align*}

Noticed that every Clifford algebra on even dimensional Euclidean space is universal (\cite[P.25 Corollary(3.6)]{Gilbert}), and
$$dim_{\R}End_{\R}(\mathbb{O}\oplus \mathbb{O})=2^8=dim_{\R}Cl_8.$$
Thus, the map $A$ gives a realization of $Cl_8$, and $\mathbb{O}\oplus \mathbb{O}$ is just the real spinor space $\mathcal{R}_8$ by definition \ref{def:CM}.
\end{proof}
This realization is extremely important for us, and it connects the Clifford analysis and octonionic analytic function theory closely.

\section{The Octonionic analytic functions}
Under the realization in former section, we will give the explicit expression of Dirac operator in this case, which connects octonion Cauchy operator.
And we will see their relationships soon.
\subsection{The Dirac operator and octonion Cauchy operator on $\R^8$}
\begin{definition}[\cite{Gilbert}]
	Under the realization of Clifford algebra $Cl_8$ in the Theorem \ref{realization} , the Dirac operator $D_8$ associated with the Euclidean space $V$ is the first-order differential operator on $\mathcal{C}^1(V, \mathbb{O}\oplus \mathbb{O})$ defined by
	$$D_8\left(\begin{matrix}
		f_1\\
		f_2
	\end{matrix}
	\right)=\sum_{j=0}^7A(e_j)\frac{\partial\ \ }{\partial x_j}\left(\begin{matrix}
		f_1\\
		f_2
	\end{matrix}
	\right)=\sum_{j=0}^7\left(\begin{matrix}
 0 & L_{e_j}\\
 -L_{\overline{e_j}} & 0
\end{matrix} \right)\frac{\partial\ \ }{\partial x_j}\left(\begin{matrix}
		f_1\\
		f_2
	\end{matrix}
	\right)=\left(\begin{matrix}
 0 & D\\
 -\overline{D} & 0
\end{matrix} \right)\left(\begin{matrix}
		f_1\\
		f_2
	\end{matrix}
	\right)$$
Where $f_1,f_2\in \mathcal{C}^1(V, \mathbb{O})$ and $D=\sum_{j=0}^7e_j\frac{\partial\ \ }{\partial x_j}$ is the octonionic Cauchy operator on $\mathcal{C}^1(V, \mathbb{O})$.
\end{definition}
The classical Clifford analysis  \cite{Gilbert,F.S,BDS82} is  to investigate the spinor-valued functions annihilated by Dirac operator, and it has full development, has became an important mathematics branch. It's so clearly $D_8$ annihilated functions is connected with octonionic analytic functions, we give the both definitions formally here.
\begin{definition}\label{Canalytic}
Suppose $\Omega\subset V$ is a domain, $f_1,f_2\in \mathcal{C}^1(V, \mathbb{O})$.
	A function $\left(\begin{matrix}
		f_1\\
		f_2
	\end{matrix}
	\right)\in \mathcal{C}^1(\Omega, \mathbb{O} \oplus \mathbb{O})$ is said to be Clifford analytic on $\Omega$, when $$D_8\left(\begin{matrix}
		f_1\\
		f_2
	\end{matrix}
	\right)=\left(\begin{matrix}
		Df_2\\
		-\overline{D}f_1
	\end{matrix}
	\right)=0$$ on $\Omega$. The set of $\mathcal{R}_8=\mathbb{O}\oplus \mathbb{O}$-valued   Clifford analytic functions in $\Omega$ is denoted by $A(\Omega)$.
\end{definition}

\begin{definition}
  Let $\Omega\subset \R^8$ be a domain, a function $f\in \mathcal{C}^1(\Omega, \O)$ is said to be (left) octonionic analytic on $\Omega$, when
  $$Df=\sum_{j=0}^7\bfe_j\frac{\partial f\ }{\partial x_j}=0$$
  on  $\Omega$, the set of octonionic analytic functions in $\Omega$ is denoted by $A(\Omega, \O)$.
\end{definition}
We give some remarks  here to illustrate the connections between Clifford analytic  and octonionic analytic.
\begin{remark}\label{rem:CandO}
	\begin{enumerate}
		\item For any octonionic analytic function $f$ on $\Omega$, from the Definition \ref{Canalytic}, we know that
		\begin{equation}\label{eq:OandC}
			\left(\begin{matrix}
		0\\
		f
	\end{matrix}
	\right)\in A(\Omega)
		\end{equation}  this is the fundamental evidence why we can study octonionic analytic functions by Clifford analytic functions.
	\item The reader who familiar with Dirac operator and index theorem in differential geometry will be aware that the relationship between $D_8$ and $D$ is connected with the $\mathbb{Z}_2$-grade of spinor spaces and Dirac $\mathcal{D}$-operators and Dirac operators. See \cite[P.207]{Gilbert} or \cite{BGV92} for example.
	\end{enumerate}
\end{remark}
\subsection{A new way to octonionic analysis from Clifford analysis}

Here, with the help of Equation \eqref{eq:OandC}, we can view a octonionic analytic function as a Clifford analytic function. Thus some results on octonionic analytic functions theory can be reformulated. We give some examples here to tells the reader how to realize it.

First, we refer the Cauchy integral theorem of spinor-valued functions, and we will show it how to transfer to octonionic Cauchy integral theorem in detail.

\begin{theorem}[\cite{Gilbert} ]\label{thm:CauchyforC}
If $M$ is a compact, $8$-dimensional, oriented $\mathcal{C}^{\infty}-$manifold in $\Omega$, then for each Clifford analytic function $f$ in $\mathcal{C}^{\infty}(\Omega, \mathcal{R}_8)$, we have
	\begin{equation}
		f(z)=\frac{1}{\omega_8}\int_{\partial M}\frac{(x-z)^{\star}}{\left|x-z\right|^8} \mathrm{d}\sigma(x)f(x).
	\end{equation}
	for each $z$ in the interior of $M$.Where $\mathrm{d}\sigma(x)=A(\eta(x))\mathrm{d}S(x)$ and $\eta(x)$ is the outer unit normal to $\partial M$ at $x$, $\mathrm{d}S(x)$ is the scalar element of surface area on $\partial M$.
\end{theorem}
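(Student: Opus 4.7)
The plan is to follow the classical Cauchy-in-Clifford-analysis template: identify the fundamental solution of the Dirac operator $D_8$, apply a Stokes-type formula on $M$ with the singularity at $z$ excised, and then send the excision radius to zero.

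First I would verify that the Cauchy kernel
\[
E(x) = \frac{1}{\omega_8}\,\frac{x^\star}{|x|^8}, \qquad x \in V\setminus\{0\},
\]
interpreted as an element of $Cl_8$ via $v$ and hence as an endomorphism of $\mathcal{R}_8=\mathbb{O}\oplus\mathbb{O}$ via the realization $A$ of Theorem \ref{realization}, is two-sided annihilated by $D_8$ on $V\setminus\{0\}$. A direct coordinate computation, using $\partial_j|x|^{-6} = -6\,x_j|x|^{-8}$ together with the Clifford relations $\bfe_j\bfe_k+\bfe_k\bfe_j = -2\delta_{jk}$ and $\bfe_j^\star = -\bfe_j$, gives $D_8 E = E D_8 = 0$ off the origin; the singularity at $0$ is of order $|x|^{-7}$ and hence integrable over any $7$-sphere.

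Next I would establish the Clifford--Stokes identity
\[
\int_{\partial N} g(x)\,\mathrm{d}\sigma(x)\,f(x) \;=\; \int_N \bigl((gD_8)(x)\,f(x) + g(x)\,(D_8 f)(x)\bigr)\,\mathrm{d}V(x)
\]
for smooth bounded $N\subset\Omega$ and smooth $\mathcal{R}_8$-valued $f,g$, as the standard consequence of the exterior-calculus Stokes theorem applied to the $\mathcal{R}_8$-valued $7$-form $g\,\mathrm{d}\sigma\,f$. Applying this identity with $N = M\setminus\overline{B_\varepsilon(z)}$, $g(x) = E(x-z)$, and the given Clifford analytic $f$, both interior integrands vanish by the previous step and by $D_8 f = 0$, so
\[
\int_{\partial M} E(x-z)\,\mathrm{d}\sigma(x)\,f(x) \;=\; \int_{\partial B_\varepsilon(z)} E(x-z)\,\mathrm{d}\sigma(x)\,f(x).
\]
On the small sphere the outer unit normal of $N$ is $\eta(x) = -(x-z)/\varepsilon$, so $\mathrm{d}\sigma(x) = -\varepsilon^{-1}A(x-z)\,\mathrm{d}S(x)$, while $E(x-z) = \omega_8^{-1}\varepsilon^{-8}(x-z)^\star$. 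The crucial collapse $(x-z)^\star\cdot A(x-z) = -A(x-z)^2 = |x-z|^2\,\mathrm{id} = \varepsilon^2\,\mathrm{id}$, which is precisely the defining Clifford relation in the matrix form of Theorem \ref{realization}, reduces the integrand to $\omega_8^{-1}\varepsilon^{-7}\,f(x)\,\mathrm{d}S(x)$. Since $f$ is continuous at $z$ and $\partial B_\varepsilon(z)$ has $7$-dimensional area $\omega_8\varepsilon^7$, the right-hand side tends to $f(z)$ as $\varepsilon\to 0^+$, yielding the claim.

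The main obstacle I anticipate is the careful sign and ordering bookkeeping, particularly because $\mathcal{R}_8=\mathbb{O}\oplus\mathbb{O}$ carries a non-associative octonionic multiplication in each component: one has to ensure that the bracketing $E(x-z)\bigl(\mathrm{d}\sigma(x)\,f(x)\bigr)$ — left-multiplication by the kernel followed by the action of $\mathrm{d}\sigma(x)\in A(\R^8)$ on $f(x)\in\mathcal{R}_8$ — is compatible with the associative operator composition used in the matrix realization, so that no nontrivial octonionic associators intervene in the collapse on $\partial B_\varepsilon(z)$. Once this compatibility has been recorded, the rest of the argument parallels the classical $Cl_n$-valued Cauchy theorem verbatim.
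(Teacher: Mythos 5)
The paper does not actually prove this theorem; it is stated with the bracket \cite{Gilbert} and simply invoked as a known result from Gilbert--Murray. Your argument is therefore not being compared against a proof in the paper but against the classical textbook argument, and it is essentially that argument. The ingredients — the fundamental solution $E$ of $D_8$, the Clifford--Stokes identity (which is precisely equation (3.22) on page 102 of Gilbert--Murray, as the authors remark two paragraphs later), excision of a small ball, and the shrinking-sphere computation — all match. Two points deserve correction or clarification.

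First, there is a sign-bookkeeping inconsistency in the excision step. If you use the $N$-outward normal $\eta(x) = -(x-z)/\varepsilon$ on $\partial B_\varepsilon(z)$ (as you state), then Stokes applied to $N = M\setminus\overline{B_\varepsilon(z)}$ gives
\[
\int_{\partial M} E(x-z)\,\mathrm{d}\sigma(x)\,f(x) \;=\; -\int_{\partial B_\varepsilon(z)} E(x-z)\,\mathrm{d}\sigma(x)\,f(x),
\]
with a minus sign you omitted; your subsequent arithmetic on the small-sphere integrand, carried through consistently with $\eta = -(x-z)/\varepsilon$, yields $-f(z)$, so the two sign slips cancel and you land on $f(z)$ only by accident. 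Either keep the minus in the Stokes decomposition, or (more standard) flip the small-sphere normal to the $B_\varepsilon$-outward one $\eta(x) = +(x-z)/\varepsilon$, which makes both signs positive and gives the clean collapse
\[
E(x-z)\,\mathrm{d}\sigma(x)\,f(x) = \frac{1}{\omega_8\varepsilon^9}\bigl(-A(x-z)\bigr)\bigl(A(x-z)\bigr)f(x)\,\mathrm{d}S(x) = \frac{1}{\omega_8\varepsilon^7}\,f(x)\,\mathrm{d}S(x).
\]

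Second, the worry you flag about octonionic associators intervening is a red herring and should be removed rather than ``recorded.'' Once $x-z$ and $\eta(x)$ are mapped through $A$ into $\mathrm{End}_{\mathbb R}(\mathcal{R}_8)$, the expression $E(x-z)\,\mathrm{d}\sigma(x)\,f(x)$ is a composition of two endomorphisms applied to a vector, which is associative by definition of $\mathrm{End}_{\mathbb R}$. The non-associativity of $\mathbb O$ is encapsulated once and for all in the verification of the Clifford relation $A(p)A(q)+A(q)A(p)=-2(p,q)\,\mathrm{id}$ (the paper's lemma using the alternativity of the associator); after that there is nothing left to check, and the argument really is verbatim the $Cl_n$-valued one.
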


From this and Equation\eqref{eq:OandC}, we can get the Cauchy integral formula for octonionic analytic functions immediately.
\begin{theorem}
	If $M$ is a compact, $8$-dimensional, oriented $\mathcal{C}^{\infty}-$manifold in $\Omega$, then for each octonionic analytic function $f$ in $\mathcal{C}^{\infty}(\Omega, \mathbb{O})$, we have
	\begin{equation}\label{eq:cauchy}
	f(z)=\frac{1}{\omega_8}\int_{\partial M}\frac{\overline{x-z}}{\left|x-z\right|^8} \left(\mathrm{d}\sigma(x)f(x)\right).
	\end{equation}
	for each $z$ in the interior of $M$. Where $\mathrm{d}\sigma(x)=\eta(x)\mathrm{d}S(x)$ and $\eta(x)\in \mathbb{O}$ is the octonion number determined by the outer unit normal to $\partial M$ at $x$, $\mathrm{d}S(x)$ is the scalar element of surface area on $\partial M$.
\end{theorem}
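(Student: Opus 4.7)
My plan is to transfer the Clifford Cauchy integral formula of Theorem~\ref{thm:CauchyforC} through the realization $A$ of Theorem~\ref{realization}, exploiting the embedding \eqref{eq:OandC} from Remark~\ref{rem:CandO}(1). Since $f \in \mathcal{C}^{\infty}(\Omega,\mathbb{O})$ is octonionic analytic, the pair $\binom{0}{f}$ is a smooth Clifford-analytic section of $\mathcal{R}_8=\mathbb{O}\oplus\mathbb{O}$, and Theorem~\ref{thm:CauchyforC} applies to it at any interior point $z\in M$. Rewriting the Clifford-valued integrand through the realization (so that $(x-z)^{\star}$ acts as $A((x-z)^{\star})$ and the operator-valued surface measure is $A(\eta(x))\mathrm{d}S(x)$) yields
\begin{equation}\label{eq:pf-lift}
\binom{0}{f(z)} = \frac{1}{\omega_8}\int_{\partial M}\frac{1}{|x-z|^8}\,A\bigl((x-z)^{\star}\bigr)\,A(\eta(x))\binom{0}{f(x)}\,\mathrm{d}S(x).
\end{equation}

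Next I would unpack the integrand. Since $x-z$ is a $1$-vector in $Cl_8$, Clifford conjugation sends it to $-(x-z)$, so by $\mathbb{R}$-linearity $A((x-z)^{\star}) = -A(x-z)$. A direct block multiplication using the explicit form of $A$ then gives the diagonal operator
$$A\bigl((x-z)^{\star}\bigr)\,A(\eta(x)) = \left(\begin{matrix} L_{x-z}L_{\overline{\eta(x)}} & 0 \\ 0 & L_{\overline{x-z}}L_{\eta(x)}\end{matrix}\right).$$
Applying this to $\binom{0}{f(x)}$ produces a column vector whose first coordinate vanishes and whose second coordinate is $L_{\overline{x-z}}L_{\eta(x)}f(x)=\overline{x-z}\bigl(\eta(x)f(x)\bigr)$, with the parenthesization coming from the composition order $L_{\overline{x-z}}\circ L_{\eta(x)}$ rather than from any associativity in $\mathbb{O}$.

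Reading off the second coordinate of \eqref{eq:pf-lift} and recalling that in the octonionic normalization $\mathrm{d}\sigma(x)=\eta(x)\mathrm{d}S(x)$ (now an $\mathbb{O}$-valued measure) delivers precisely \eqref{eq:cauchy}. The only non-routine point is the bookkeeping of octonionic non-associativity: one must preserve the grouping inherited from operator composition, so that $\overline{x-z}$ multiplies into the already-formed octonion $\eta(x)f(x)=\mathrm{d}\sigma(x)f(x)/\mathrm{d}S(x)$, not into $\overline{x-z}\,\eta(x)$ first. Beyond this, the argument is a mechanical translation of the spinor Cauchy formula through $A$, and no new analytic input is required beyond Theorems~\ref{realization} and \ref{thm:CauchyforC}.
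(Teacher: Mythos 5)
Your proposal is correct and is essentially the same argument as the paper's: lift $f$ to $\binom{0}{f}\in A(\Omega)$ via \eqref{eq:OandC}, apply Theorem \ref{thm:CauchyforC}, compute the block product $A((x-z)^{\star})A(\eta(x))$ using $A((x-z)^{\star})=-A(x-z)$, and read off the second component as $L_{\overline{x-z}}L_{\eta(x)}f$, which yields \eqref{eq:cauchy} with the correct octonionic bracketing. The only cosmetic difference is that you multiply the two matrices first (obtaining the diagonal form) while the paper applies them one at a time; your explicit remark that the parenthesization $\overline{x-z}\,(\eta(x)f(x))$ is forced by the operator-composition order, not by any associativity in $\mathbb{O}$, is a welcome clarification of a point the paper leaves implicit.
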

\begin{proof}
	For any octoionic analytic function $f$ on $\omega$, we have $\left(\begin{matrix}
		0\\
		f
	\end{matrix}
	\right)\in A(\Omega)$, thus for each $z$ in the interior of $M$, by Theorem \ref{thm:CauchyforC}, we have
	
		\begin{align*}
		\left(\begin{matrix}
		0\\
		f
	\end{matrix}
	\right)  &=\frac{1}{\omega_8}\int_{\partial M}\frac{(x-z)^{\star}}{\left|x-z\right|^8} \mathrm{d}\sigma(x)\left(\begin{matrix}
		0\\
		f
	\end{matrix}
	\right)\\
	&=\frac{1}{\omega_8}\int_{\partial M}\frac{1}{\left|x-z\right|^8} \left(\begin{matrix}
 0 & -L_{x-z}\\
 L_{\overline{x-z}} & 0
\end{matrix} \right)\left(\begin{matrix}
 0 & L_{\eta(x)}\\
 -L_{\overline{\eta(x)}} & 0
\end{matrix} \right)\left(\begin{matrix}
		0\\
		f
	\end{matrix}
	\right)\mathrm{d}S(x)\\
	&=\frac{1}{\omega_8}\int_{\partial M}\frac{1}{\left|x-z\right|^8} \left(\begin{matrix}
		0\\
		L_{\overline{x-z}}L_{{\eta(x)}} f
	\end{matrix}
	\right)\mathrm{d}S(x).
	\end{align*}
It's nothing but the equation \eqref{eq:cauchy}, so we done.
\end{proof}
\begin{remark}
\begin{enumerate}
	\item This example shows us how to get octonionic analytic function properties by the   spinor-valued Clifford analytic functions. Actually if the properties only involves the octonionic multiplications and real-linear properties, then we can get these properties by $\mathcal{R}_8$-valued Clifford analytic functions. The deep reason why we can do this way is that the octionic left multiplication operator can gives by Clifford action.
    \item This result has been proved by \cite{LP}, and in this article, the authors get the ``integration by parts" formula.   We point out that it's nothing but the equation (3.22) in \cite[P.102]{Gilbert}.
\end{enumerate}
	\end{remark}
The critical index of subharmonicity of octonionic analytic function has been proved by  \cite{LW} and \cite{AD}, but actually we  can  also get this result from Clifford analytic, and we refer reader to \cite[Chapter 4]{Gilbert} for an excellent introduction.
\begin{theorem}
	Let $\Omega$ be any open set  in $V$. Then
	\begin{enumerate}
		\item whenever $p\ge \dfrac{6}{7}$ and $f$ is a octonionic analytic function on $\Omega$, $x\to\left|f(x)\right|^p$ is subharmonic on $\Omega$.
		\item if $0<p<\dfrac{6}{7}$, there is a octonionic analytic function $f$ such that $x\to\left|f(x)\right|^p$ is not subharmonic anywhere on its domain of definition.
	\end{enumerate}
\end{theorem}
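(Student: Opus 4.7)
The plan is to reduce both statements to the known critical-index subharmonicity theorem for spinor-valued Clifford analytic functions via the embedding of Section~4. By Equation~\eqref{eq:OandC}, any octonionic analytic $f$ on $\Omega$ gives rise to a Clifford analytic $F=\begin{pmatrix}0\\ f\end{pmatrix}\in A(\Omega)$ taking values in $\mathcal{R}_8=\mathbb{O}\oplus\mathbb{O}$. Under the natural Euclidean structure on $\mathbb{O}\oplus\mathbb{O}$ (direct sum of the octonionic norms) we have $|F(x)|=|f(x)|$ pointwise, so $|f|^p$ and $|F|^p$ are the same function. Hence subharmonicity questions about $|f|^p$ are equivalent to the corresponding questions about $|F|^p$.

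For part~(1), I would invoke the theorem from \cite[Chapter~4]{Gilbert} which states that for any spinor-valued Clifford analytic function $F$ on an open set in $\R^n$, the function $x\mapsto|F(x)|^p$ is subharmonic provided $p\ge (n-2)/(n-1)$. Specializing to $n=8$ gives the critical exponent $6/7$, and the equivalence $|F|^p=|f|^p$ above transfers subharmonicity to $|f|^p$.

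For part~(2) (sharpness), I would exhibit the Cauchy kernel $f(x)=\overline{x}/|x|^{8}$, which by~\eqref{eq:cauchy} (with the pole at the origin) is octonionic analytic on $\R^8\setminus\{0\}$ and satisfies $|f(x)|=|x|^{-7}$. Using the radial identity $\Delta(|x|^{\alpha})=\alpha(\alpha+n-2)|x|^{\alpha-2}$ in $\R^8$, one computes
\begin{equation*}
\Delta\bigl(|f(x)|^p\bigr)=\Delta\bigl(|x|^{-7p}\bigr)=7p\,(7p-6)\,|x|^{-7p-2}.
\end{equation*}
For $0<p<6/7$ this is strictly negative throughout $\R^8\setminus\{0\}$, so $|f|^p$ is strictly superharmonic there and hence nowhere subharmonic on its domain of definition.

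The step I expect to require the most care is verifying that the transfer is genuinely isometric, i.e.\ that the Hilbert-space norm on $\mathcal{R}_8=\mathbb{O}\oplus\mathbb{O}$ implicitly used in Gilbert's Chapter~4 subharmonicity theorem agrees with the octonionic norm on the second summand; once this identification is made (which is immediate from the construction of $\mathcal{R}_8$ in Theorem~\ref{realization}), both halves of the statement follow without further computation beyond the explicit Laplacian above. The rest is a direct quotation of a known Clifford-analytic result plus one one-line radial computation.
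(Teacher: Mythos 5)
Your proposal matches the paper's own proof exactly: part (1) is transferred to Gilbert's Corollary 3.39 on $\mathcal{R}_8$-valued Clifford analytic functions via the embedding $f\mapsto\begin{pmatrix}0\\ f\end{pmatrix}$ and the norm identity $|F|=|f|$, and part (2) uses the same Cauchy kernel $f(x)=\overline{x}/|x|^8$. The only difference is that you explicitly carry out the radial Laplacian computation $\Delta(|x|^{-7p})=7p(7p-6)|x|^{-7p-2}<0$ for $0<p<6/7$, which the paper leaves to the reader.
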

\begin{proof}The first proposition just follows the results of spinor-valued Clifford analytic functions.
	\begin{quote}
	\begin{theorem}\cite[P.108 Corollary 3.39]{Gilbert}
		Let $\Omega$ be any open set  in $V$, whenever $p\ge \dfrac{6}{7}$ and $f$ is a $\mathcal{R}_8$-valued Clifford analytic function on $\Omega$, $x\to\left|f(x)\right|^p$ is subharmonic on $\Omega$.
	\end{theorem}	
	and the fact the norm of $\left(\begin{matrix}
		0\\
		f
	\end{matrix}
	\right)$ is equal to $\left|f\right|$.
	\end{quote}

	 As for the second proposition, just take $f(x)=\dfrac{\overline{x}}{\left|x\right|^8}, x\neq 0$.
\end{proof}
There are still some Clifford analytic results can be transfer into octonionic analytic, for example, Mean-value theorem, Cauchy theorem, Morera's theorem, Maximum Modulus Principle, Weierstrass theorem etc, we don't repeat it so much, we refer to \cite[Chapter2]{Gilbert} and \cite{BDS82} for an introduction to Clifford analytic results and we encourage readers to get octonionic result directly by the method we introduced.
\subsection{Octonionic Hardy space} Review the materials we used in Clifford Hardy space and \eqref{eq:OandC}, the upper half-space Octonionic Hardy space theory can be built without any affords.
\begin{definition}
For any $p>0$, the Hardy space $\HP$ of octonionic analytic functions is defined to be the space of all octonionic analytic $F$ in $\R^8_+$ satisfying :
$$\HN{f}=\sup_{t>0} \left(\int_{\mathbb{R}^7}{\abs{F(t, \underline{x})}^p}\mathrm{d}x\right)^{\frac{1}{p}}<\infty.$$
\end{definition}
Notice that for any $f_1,f_2\in L^p(\R^7, \O), p\ge 1$, recall \eqref{eq:Cauchy} give us :
\begin{align*}
  C\left(\begin{matrix}
		f_1\\
		f_2
	\end{matrix}\right) & = \frac{1}{\omega_8} \int_{\mathbb{R}^{7}} \frac{u-z}{|u-z|^8} \bfe_0 \left(\begin{matrix}
		f_1\\
		f_2
	\end{matrix}\right) \mathrm{d} u\\
&=\frac{1}{\omega_8}\int_{\mathbb{R}^{7}}\frac{1}{\left|u-z\right|^8} \left(\begin{matrix}
 0 & L_{u-z}\\
 -L_{\overline{u-z}} & 0
\end{matrix} \right)\left(\begin{matrix}
 0 & L_1\\
 -L_{1} & 0
\end{matrix} \right)\left(\begin{matrix}
		f_1\\
		f_2
	\end{matrix}
	\right)\mathrm{d}u\\
&=\frac{1}{\omega_8}\int_{\mathbb{R}^{7}}\left(\begin{matrix}
		-\frac{u-z}{|u-z|^8}f_1\\
		-\frac{\overline{u-z}}{|u-z|^8}f_2
	\end{matrix}
	\right)\mathrm{d}u.
\end{align*}
 Thus for  $f\in L^p(\R^7, \O), p\ge 1$, if we define the octonionic Cauchy integral of $f$  by
 $$ C_{\O}(f)=\frac{1}{\omega_8}\int_{\mathbb{R}^{7}}\frac{\overline{z-u}}{|u-z|^8}f(u)\mathrm{d}u.$$
  and $\mathcal{H}_{\O}=-\sum_{j=1}^{7} \bfe_j R_j$, where $R_j$ is the $j$-th Riesz transform as former, then we will have something analogue to Theorem \ref{thm:bdv} and Theorem \ref{thm:Hp}.
 \begin{theorem}\label{thm:obdv}
  Suppose $F \in H^p\left(\mathbb{R}_{+}^n, \O\right), p>\frac{6}{7}$. Then there is a function $f \in$ $L^p\left(\mathbb{R}^{7}, \O\right)$ such that
  \begin{enumerate}
    \item $\lim \limits_{z \rightarrow x  n.t.}$ $F(z)=f(x)$ exists for almost all $x \in \mathbb{R}^{7}$,
    \item $\lim \limits_{t \rightarrow 0}\displaystyle \int_{\mathbb{R}^{7}}|F(x, t)-f(x)|^p \mathrm{d} x=0$.
  \end{enumerate}
\end{theorem}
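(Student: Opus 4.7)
The plan is to reduce Theorem \ref{thm:obdv} directly to its Clifford counterpart, Theorem \ref{thm:bdv}, using the embedding of octonion analytic functions into $\mathcal{R}_8$-valued Clifford analytic functions that was already established in Remark \ref{rem:CandO}. First I would form, given $F \in H^p(\mathbb{R}^8_+, \mathbb{O})$, the column
$$
\widetilde F(z)=\begin{pmatrix} 0 \\ F(z)\end{pmatrix},\qquad z\in \mathbb{R}^8_+.
$$
By equation \eqref{eq:OandC}, $\widetilde F$ is $\mathcal{R}_8$-valued Clifford analytic on $\mathbb{R}^8_+$. The next point to check is that the Hardy norms agree: under the Euclidean structure on $\mathbb{O}\oplus\mathbb{O}$ one has $|\widetilde F(x,t)|=|F(x,t)|$ pointwise, so
$$
\|\widetilde F\|_{H^p(\mathbb{R}^8_+,\mathcal{R}_8)}=\|F\|_{H^p(\mathbb{R}^8_+,\mathbb{O})}<\infty.
$$
Hence $\widetilde F\in H^p(\mathbb{R}^8_+,\mathcal{R}_8)$. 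Since $n=8$, the Clifford threshold $\tfrac{n-2}{n-1}$ in Theorem \ref{thm:bdv} is exactly $\tfrac{6}{7}$, which matches our hypothesis.

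Applying Theorem \ref{thm:bdv} to $\widetilde F$ produces a boundary function
$$
\widetilde f=\begin{pmatrix} g_1 \\ g_2 \end{pmatrix}\in L^p(\mathbb{R}^7,\mathcal{R}_8)
$$
such that $\widetilde F(z)\to \widetilde f(x)$ non-tangentially for a.e.\ $x\in\mathbb{R}^7$ and $\int_{\mathbb{R}^7}|\widetilde F(x,t)-\widetilde f(x)|^p\,dx\to 0$ as $t\to 0$. The component identity is what lets me finish: since the first coordinate of $\widetilde F$ is identically zero on $\mathbb{R}^8_+$, the non-tangential limit of that coordinate is also zero a.e., so $g_1=0$ a.e.\ on $\mathbb{R}^7$. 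Setting $f:=g_2\in L^p(\mathbb{R}^7,\mathbb{O})$, the a.e.\ convergence $\widetilde F\to \widetilde f$ n.t.\ is precisely the coordinate-wise statement $F(z)\to f(x)$ n.t.\ (the first coordinate is trivial), which gives part (1).

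For part (2) I would simply transport the $L^p$ convergence back through the embedding: using $|\widetilde F(x,t)-\widetilde f(x)|=|F(x,t)-f(x)|$, the Clifford $L^p$ limit translates verbatim to
$$
\lim_{t\to 0}\int_{\mathbb{R}^7}|F(x,t)-f(x)|^p\,dx=0.
$$
I do not anticipate a genuine obstacle here; the whole proof is a one-page reduction once the identification $F\leftrightarrow \widetilde F$ is in place. The only mildly subtle point to be explicit about is the \emph{pointwise norm equality} $|\widetilde F|=|F|$, which depends on equipping $\mathcal{R}_8=\mathbb{O}\oplus\mathbb{O}$ with the natural Euclidean norm from the octonionic inner product on each summand; this is the same structure that was used when viewing $\mathcal{R}_8$ as a real Hilbert space in Definition \ref{def:CM}, so no new estimate is required.
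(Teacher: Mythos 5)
Your proposal is correct and follows exactly the same route as the paper: embed $F$ as the column $\bigl(\begin{smallmatrix}0\\ F\end{smallmatrix}\bigr)\in H^p(\mathbb{R}^8_+,\mathcal{R}_8)$, apply Theorem \ref{thm:bdv} with $n=8$ (so that $\tfrac{n-2}{n-1}=\tfrac{6}{7}$), and read off the second component of the boundary trace. The paper states this in one line; you have merely written out the (correct) details of why the first component vanishes and why the Hardy and $L^p$ norms transfer.
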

\begin{proof}
  Just take $\left(\begin{matrix}
		0\\
		F
	\end{matrix}\right)\in H^p\left(\mathbb{R}_{+}^8, \O\oplus \O\right)= H^p\left(\mathbb{R}_{+}^8, \mathcal{R}_8\right)$ in Theorem \ref{thm:bdv}.
\end{proof}

\begin{theorem}\label{thm:oHp}
Suppose that either (i) $1<p<\infty$ and $f \in L^p\left(\mathbb{R}^{7}, \O\right)$, or (ii) $p= 1$ and $f, \mathcal{H}_{\O} f \in L^1\left(\mathbb{R}^{7}, \O\right)$. Then $C_{\O} f \in H^p\left(\mathbb{R}_{+}^8, \O\right)$, and
$$
\lim _{z \rightarrow x, n.t} C_{\O} f(z)=\frac{1}{2}\left(I+\mathcal{H}_{\O} \right) f(x)
$$
for almost all $x \in \mathbb{R}^{7}$.

Conversely, if $1 \leq p<\infty$ and suppose $F \in H^p\left(\mathbb{R}_{+}^8, \O\right)$. Then $F=C_{\O} f$, where $f$ is the almost-everywhere non-tangential limit of $F$ given by Theorem \ref{thm:obdv}.
\end{theorem}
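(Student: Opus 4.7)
The strategy is to lift every octonion-valued object to an $\mathcal{R}_8=\mathbb{O}\oplus\mathbb{O}$-valued one via the embedding $f\mapsto \binom{0}{f}$ introduced in Remark \ref{rem:CandO} and then transport Theorem \ref{thm:Hp} (the Clifford-valued result) through this embedding. Two observations make this automatic: (a) the map $f\mapsto\binom{0}{f}$ is an $L^p$-isometry $L^p(\mathbb{R}^7,\mathbb{O})\hookrightarrow L^p(\mathbb{R}^7,\mathcal{R}_8)$ and an isometry at the level of Hardy norms $\HN{\,\cdot\,}\hookrightarrow \|\cdot\|_{H^p(\mathbb{R}^8_+,\mathcal{R}_8)}$; (b) the calculation displayed immediately before the statement shows
\[
C\begin{pmatrix}0\\f\end{pmatrix}=\begin{pmatrix}0\\C_{\mathbb{O}}f\end{pmatrix},
\]
so the Clifford Cauchy integral preserves the subspace $\{0\}\oplus\mathbb{O}$.

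For the forward direction, I would proceed as follows. Given $f\in L^p(\mathbb{R}^7,\mathbb{O})$ (or $f,\mathcal{H}_{\mathbb{O}}f\in L^1$ when $p=1$), form $\binom{0}{f}\in L^p(\mathbb{R}^7,\mathcal{R}_8)$. Note that $\mathcal{H}\binom{0}{f}=\binom{-\mathcal{H}_{\mathbb{O}}f}{0}$, which is integrable precisely when $\mathcal{H}_{\mathbb{O}}f$ is; hence the hypotheses of Theorem \ref{thm:Hp} are met. That theorem then gives $C\binom{0}{f}\in H^p(\mathbb{R}^8_+,\mathcal{R}_8)$, and by (b) its second component is $C_{\mathbb{O}}f$; octonionic analyticity of $C_{\mathbb{O}}f$ is exactly the Clifford analyticity equation $Df_2=0$ from Definition \ref{Canalytic}, so $C_{\mathbb{O}}f\in H^p(\mathbb{R}^8_+,\mathbb{O})$ with matching norm.

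For the boundary value, Theorem \ref{thm:Hp} provides
\[
\lim_{z\to x,\,n.t.}C\begin{pmatrix}0\\f\end{pmatrix}(z)=\tfrac12\bigl(I+\bfe_0\mathcal{H}\bigr)\begin{pmatrix}0\\f\end{pmatrix}(x),
\]
so I only need a short computation in the realization of Theorem \ref{realization}. Since $\bfe_0=1$ in $\mathbb{O}$, $A(\bfe_0)\binom{g}{h}=\binom{h}{-g}$; and since $\overline{\bfe_j}=-\bfe_j$ for $j=1,\dots,7$, $A(\bfe_j)\binom{g}{h}=\binom{\bfe_j h}{\bfe_j g}$. Chaining these yields
\[
\mathcal{H}\begin{pmatrix}0\\f\end{pmatrix}=\begin{pmatrix}-\mathcal{H}_{\mathbb{O}}f\\0\end{pmatrix},\qquad \bfe_0\mathcal{H}\begin{pmatrix}0\\f\end{pmatrix}=\begin{pmatrix}0\\\mathcal{H}_{\mathbb{O}}f\end{pmatrix},
\]
so the right-hand side above equals $\binom{0}{\tfrac12(I+\mathcal{H}_{\mathbb{O}})f(x)}$, giving the claimed non-tangential limit for $C_{\mathbb{O}}f$. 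For the converse, given $F\in H^p(\mathbb{R}^8_+,\mathbb{O})$, Theorem \ref{thm:obdv} furnishes a boundary function $f$; then $\binom{0}{F}\in H^p(\mathbb{R}^8_+,\mathcal{R}_8)$ has boundary value $\binom{0}{f}$, and Theorem \ref{thm:Hp} gives $\binom{0}{F}=C\binom{0}{f}=\binom{0}{C_{\mathbb{O}}f}$, whence $F=C_{\mathbb{O}}f$ by reading off the second component.

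The main (and essentially only) obstacle is purely bookkeeping: one must carefully track the signs introduced by octonionic conjugation $\overline{\bfe_j}=-\bfe_j$ and by the block form of $A(q)$, and verify that they combine with the sign conventions in the definitions of $C_{\mathbb{O}}f$ and $\mathcal{H}_{\mathbb{O}}=-\sum_{j=1}^{7}\bfe_j R_j$ to produce the clean identities above. Once those matrix computations are done correctly (and there is no non-associativity issue because every product that appears is a left-multiplication applied to a single octonion, acting within the Clifford representation), the theorem is a direct corollary of Theorem \ref{thm:Hp} via the embedding $f\mapsto\binom{0}{f}$.
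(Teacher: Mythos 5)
Your proof is correct and follows exactly the same route as the paper's: lift $f$ and $F$ to $\mathcal{R}_8=\mathbb{O}\oplus\mathbb{O}$ via $f\mapsto\binom{0}{f}$ and invoke Theorem \ref{thm:Hp}. The only difference is that the paper leaves all the sign bookkeeping (the identities $\mathcal{H}\binom{0}{f}=\binom{-\mathcal{H}_{\mathbb{O}}f}{0}$, $\bfe_0\mathcal{H}\binom{0}{f}=\binom{0}{\mathcal{H}_{\mathbb{O}}f}$, and $C\binom{0}{f}=\binom{0}{C_{\mathbb{O}}f}$) implicit as a one-line ``just take'' instruction, whereas you carry it out explicitly and correctly.
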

\begin{proof}
  Just take $\left(\begin{matrix}
		0\\
		f
	\end{matrix}\right)\in L^p(\R^7, \O\oplus \O)=L^p(\R^7,\mathcal{R}_8) $ and$\left(\begin{matrix}
		0\\
		F
	\end{matrix}\right)\in H^p\left(\mathbb{R}_{+}^8, \O\oplus \O\right)=H^p\left(\mathbb{R}_{+}^8, \mathcal{R}_8\right)$ in Theorem  \ref{thm:Hp}.
\end{proof}
Theorem \ref{thm:oHp} says that for $1<p<\infty, H^p\left(\mathbb{R}_{+}^8, \O\right)$ is precisely the image of $L^p\left(\mathbb{R}^{7}, \O\right)$ under the octonionic Cauchy integral operator $C_{\O}$, while $H^1\left(\mathbb{R}_{+}^8, \O\right)$ is the image under $C_{\O}$ of the set of all $f \in$ $L^1\left(\mathbb{R}^{7}, {\O}\right)$ for which $\mathcal{H}_{\O} f \in L^1\left(\mathbb{R}^{7}, {\O}\right)$.

Moreover, for $1 \leq p<\infty$, $f \in L^p\left(\mathbb{R}^{n-1}, \O\right)$ arises as the non-tangential boundary value of an $H^p\left(\mathbb{R}_{+}^n, \O\right)$ function if and only if $f=\mathcal{H}_{\O}  f \in L^p\left(\mathbb{R}^{7}, \O\right)$.

\section{Proof of main theorems}
Now we turn to the proof of Theorem \ref{thm:main1}:
\begin{proof}[ Proof of Theorem \ref{thm:main1}]
  First we proof for the spinor space $\mathcal{R}_8=\O\oplus \O$, Theorem \ref{thm:main1} holds.
  We take $\eta=\bfe_0, \mathfrak{H}_0=\O\left(\begin{matrix}
		1\\
		1
	\end{matrix}\right)=\left\{\left(\begin{matrix}
		p\\
		p
	\end{matrix}\right): p\in \O\right\},
$ then we have:
$$\eta \mathfrak{H}_0=\left\{\left(\begin{matrix}
		0&1\\
		-1&0
	\end{matrix}\right)\left(\begin{matrix}
		p\\
		p
	\end{matrix}\right): p\in \O\right\}=\left\{\left(\begin{matrix}
		p\\
		-p
	\end{matrix}\right): p\in \O\right\}=\O\left(\begin{matrix}
		1\\
		-1
	\end{matrix}\right).$$
So we have
\begin{equation}\label{eq:subspace}
  \O\oplus\O=\mathfrak{H}_0\oplus \eta \mathfrak{H}_0.
\end{equation}

 Before check the Cauchy integral operator $C$ is an isomorphism from $L^p(\R^7, \mathfrak{H}_0 )$ onto $H^p(\R^8_+,\O\oplus\O)$ for all $p>1$, we need some explains the meaning of isomorphism.

 For any $f\in L^p(\R^7, \mathfrak{H}_0 ),p>1 $, we know $C(f)\in H^p(\R^8_+,\O\oplus\O) $ by Theorem\ref{thm:Hp}, and $C(f)$ has a non-tangent limits $\dfrac{1}{2}(f(x)+\bfe_0\mathcal{H}f(x))\in L^p(\R^7, \O\oplus\O ) $ for almost $x\in\R^7$, so we must check the projection of $(f+\bfe_0\mathcal{H}f)$ onto $\mathfrak{H}_0$  is just $f\in L^p(\R^7, \mathfrak{H}_0 )$.

 Conversely, for any $F\in H^p(\R^8_+,\O\oplus\O), p>1 $ it has a non-tangent limits
 $f\in L^p(\R^7, \O\oplus\O )$, which satisfies $f=\bfe_0\mathcal{H}f$.  It has a decomposition
 \begin{equation*}
   f=g\left(\begin{matrix}
		1\\
		1
	\end{matrix}\right)+h\left(\begin{matrix}
		1\\
		-1
	\end{matrix}\right)
 \end{equation*}
 $g,h\in L^p(\R^7,\O)$ by \eqref{eq:subspace}. So  we also need   check
  \begin{equation}\label{eq:task2}
     f=g\left(\begin{matrix}
		1\\
		1
	\end{matrix}\right)+\bfe_0\mathcal{H}\left[g\left(\begin{matrix}
		1\\
		1
	\end{matrix}\right)\right].
  \end{equation}

 But all these just follow the  action of $Cl_8$ on $\O\oplus \O$, more precisely:
For $p>1$ and  $f=g\left(\begin{matrix}
		1\\
		1
	\end{matrix}\right)\in L^p(\R^7, \mathfrak{H}_0 ), g\in L^p(\R^7, \O) $ we have
\begin{align*}
  f+\bfe_0\mathcal{H}f & =\left(\begin{matrix}
		g\\
		g
	\end{matrix}\right)+\sum_{j=1}^{7}\left(\begin{matrix}
		0&1\\
		-1&0
	\end{matrix}\right)\left(\begin{matrix}
		0&L_{\bfe_j}\\
		L_{\bfe_j}&0
	\end{matrix}\right)R_j\left(\begin{matrix}
		g\\
		g
	\end{matrix}\right) \\
&=\left(\begin{matrix}
		g\\
		g
	\end{matrix}\right)+\sum_{j=1}^{7}\left(\begin{matrix}
		L_{\bfe_j}&0\\
		0&-L_{\bfe_j}
	\end{matrix}\right)\left(\begin{matrix}
		R_jg\\
		R_jg
	\end{matrix}\right)\\
&=\left(\begin{matrix}
		g\\
		g
	\end{matrix}\right)+\sum_{j=1}^{7}L_{\bfe_j}R_jg\left(\begin{matrix}
		1\\
		-1
	\end{matrix}\right)=g\left(\begin{matrix}
		1\\
		1
	\end{matrix}\right)-\mathcal{H}_{\O}g\left(\begin{matrix}
		1\\
		-1
	\end{matrix}\right)
\end{align*}
So the $\mathfrak{H}_0$ part of $f+\bfe_0\mathcal{H}f$ is just $f$. And follow the result above, the equation \eqref{eq:task2} is noting but $h=-\mathcal{H}_{\O}g$.

By $f=\bfe_0\mathcal{H}f$, we can get
\begin{align}\label{eq:f=hf}
  g\left(\begin{matrix}
		1\\
		1
	\end{matrix}\right)+h\left(\begin{matrix}
		1\\
		-1
	\end{matrix}\right)=&\sum_{j=1}^{7}\left(\begin{matrix}
		0&1\\
		-1&0
	\end{matrix}\right)\left(\begin{matrix}
		0&L_{\bfe_j}\\
		L_{\bfe_j}&0
	\end{matrix}\right)R_j\left[g\left(\begin{matrix}
		1\\
		1
	\end{matrix}\right)+h\left(\begin{matrix}
		1\\
		-1
	\end{matrix}\right)\right]\nonumber\\
 =&\sum_{j=1}^{7}L_{\bfe_j}R_jg\left(\begin{matrix}
		1\\
		-1
	\end{matrix}\right)+\sum_{j=1}^{7}L_{\bfe_j}R_jh\left(\begin{matrix}
		1\\
		1
	\end{matrix}\right)=-\mathcal{H}_{\O}h\left(\begin{matrix}
		1\\
		1
	\end{matrix}\right)-\mathcal{H}_{\O}g\left(\begin{matrix}
		1\\
		-1
	\end{matrix}\right).
\end{align}
So we complete the proof of (2) in Theorem \ref{thm:main1} .

For (3), we denote $Tan$  the projection of $\O\oplus \O$ onto $\mathfrak{H}_0$, and $F^+\in L^p(\R^7, \O\oplus \O), p>1$ be the almost-everywhere non-tangential limits of a Clifford Hardy function $F\in H^p(\R^8_+, \O\oplus \O)$, so the boundary operator mapping from $H^p(\R^8_+, \O\oplus \O)$ onto $L^p(\R^7, \mathfrak{H}_0 )$ is just $F\to Tan(F^+)$. By theorem \ref{thm:bdv}, we have
$$\| F^+\|_{L^p(\R^7,\O\oplus\O)}=\lim_{t\to 0}\left(\int_{\R^7}\abs{F(x,t)}^p\mathrm{d}x\right)^{\frac{1}{p}}\le \sup_{t> 0}\left(\int_{\R^7}\abs{F(x,t)}^p\mathrm{d}x\right)^{\frac{1}{p}}=\|F\|_{H^p(\R^8_+, \O\oplus \O)}$$
Thus we have
$$\|Tan ( F^+)\|_{L^p(\R^7,\mathfrak{H}_0)}\le \|F^+\|_{L^p(\R^7,\O\oplus\O)}\le \|F\|_{H^p(\R^8_+, \O\oplus \O)}.$$
which implies (3) in Theorem \ref{thm:main1}, so we complete our proof in $H^p(\R^8_+,\mathcal{R}_8)$ case.

In the general cases, for any Clifford module $\mathfrak{H}$,  since $Cl_8$ is a simple algebra, consequently, $\mathfrak{H}$ is isomorphic to a direct sum of $\mathcal{R}_8$. So  theorem \ref{thm:main1} still holds for any  Clifford module $\mathfrak{H}$, we have complete the conjecture on $\R^8$ totally.

\end{proof}

\begin{remark}
  In the \eqref{eq:f=hf}, we get not only $h=-\mathcal{H}_{\O}g$, but also $g=-\mathcal{H}_{\O}h$. But it does not surprise us, because in general we have $$\mathcal{H}_{\O}^2g=g,\quad \forall g\in L^p(\R^7,\O).$$
  This fact based on some basic properties of Riesz transforms $R_j$, for example
  $$R_jR_kg=R_kR_jg, $$ and
  $$\sum_{j=1}^{7}R_j^2g=-g,\forall g\in L^p(\R^7,\O).$$
   All these properties can be obtained by Fourier transform easily, we refer  readers to \cite[P.324]{GTM249}  for a detailed introduction.
\end{remark}
At last of the section, we give a proof of Theorem \ref{thm:main2}.
\begin{proof}[Proof of Theorem \ref{thm:main2}]
First we prove that there is a real-valued Schwartz function $f\in \mathcal{S}(\R^7)$, such that
  \begin{equation*}
       R_jf(0)=\frac{2}{\omega_n} \int_{\mathbb{R}^{7}} \frac{-u_j}{|u|^8} f(u) \mathrm{d} u=\delta_{j1},\quad j=1,2,\cdots,7.
  \end{equation*}
Actually we can take $f(x)=c x_1e^{-\abs{x}^2}$ and the constant $c$ is selected to satisfy
$R_1f(0)=1$.

  Now, suppose we have two proper subspaces $\mathfrak{H}_0, \mathfrak{H}_1$ of $\O$ satisfy all three statements in  Theorem \ref{thm:main2}, and
  $$\mathfrak{H}_0=span_{\R}\{\xi_1,\xi_2,\cdots,\xi_m\}.$$
  Where $\{\xi_1,\xi_2,\cdots,\xi_m\}$ forms a orthogonal basis of $\mathfrak{H}_0$.

  Take $g=f\xi_1\in L^p(\R^7,\mathfrak{H}_0)$, we know that $C_{\O}(f)$ has a non-tangential limits
  $$C_{\O}(f)^+=\dfrac{1}{2}\left(g+\mathcal{H}_{\O}g\right),$$
  by Theorem \ref{thm:oHp}.
  And according  (2) in the Theorem \ref{thm:main2}, we must have $$Tan(C_{\O}(f)^+)=g=f\xi_1,$$
  so $\mathcal{H}_{\O}g\in L^p(\R^7,\mathfrak{H}_1)$. While $\mathcal{H}_{\O}g(0)=\bfe_1\xi_1\in \mathfrak{H}_1$. Under the same reason, we have
  $$\bfe_j\xi_1\in \mathfrak{H}_1, \quad \forall j=1,2,\cdots,7.$$
  But we have
  $$(\bfe_i\xi_1,\bfe_j\xi_1)=((\bfe_i\xi_1)\overline{\xi_1}, \bfe_j)=(\bfe_i,\bfe_j)=\delta_{ij}, \quad \forall i,j=1,2,\cdots ,7.$$
  That means $dim_{\R}\mathfrak{H}_1\ge 7$, so we have $\mathfrak{H}_0=span_{\R}\{\mathbf{p}\}$, while $$\mathfrak{H}_1=span_{\R}\{\bfe_1\mathbf{p},\bfe_2\mathbf{p},\ldots,\bfe_7\mathbf{p}\}.$$

   Now for a $F \in H^p(\R^8_+ \O), p>1 $ it has a non-tangent limits
 $F^+\in L^p(\R^7, \O)$, which satisfies $F^+=\mathcal{H}_{\O}F^+$.
 Suppose $$F^+=\sum_{j=0}^7f_j\bfe_j\mathbf{p}$$ where $f_j\in L^p(\R^7,\R), j=1, 2, \ldots,7$.
 So we have $$Tan(F^+)=h:=f_0\mathbf{p}\in L^p(\R^7,\mathfrak{H}_0),$$

 Now if (2) in the Theorem \ref{thm:main2} holds,  we must have
 $$F^+=h+\mathcal{H}_{\O}h=h-\sum_{j=1}^{7}\bfe_jR_jh=f_0\mathbf{p}-\sum_{j=1}^{7}R_jf_0\bfe_j\mathbf{p}.$$
 So we have
 \begin{equation}\label{eq:SW}
   f_j=-R_j(f_0), j=1,2,\ldots,7.
 \end{equation}

 We turn our direction to  take a $F \in H^p(\R^8_+ \O), p>1 $ such that \eqref{eq:SW} doesn't hold, consequently,  (2) in the Theorem \ref{thm:main2} doesn't hold.

Take $g\in L^p(\R^7,\R)$ and $$F^+:=g\bfe_1\mathbf{p}+\mathcal{H}_{\O}g\bfe_1\mathbf{p},$$ then we have $F^+=\mathcal{H}_{\O}F^+$,
Theorem \ref{thm:oHp} tells us it's a non-tangential limits of a octonion Hardy function $F\in H^p(\R^8_+,\O)$.
We claim that for this $F^+=\sum\limits_{j=0}^7f_j\bfe_j\mathbf{p}$ is what we want, it means \eqref{eq:SW} doesn't hold for $F^+$.


As a fact, $$F^+:=g\bfe_1\mathbf{p}-\sum_{j=1}^{7}R_j\bfe_j (g\bfe_1\mathbf{p})$$
So we have :
\begin{equation}\label{eq:f+}
 F^+(\overline{p})=g\bfe_1\mathbf{p}(\overline{\mathbf{p}})-\sum_{j=1}^{7}R_jg (\bfe_j(\bfe_1\mathbf{p}))\overline{\mathbf{p}}=R_1g+g\bfe_1-\sum_{j=2}^{7}g_j \bfe_j
\end{equation}
And \eqref{eq:f+} just follows that
\begin{align*}
  ((\bfe_j(\bfe_1\mathbf{p}))\overline{\mathbf{p}},\bfe_0) & =-(\bfe_1\mathbf{p},\bfe_j\mathbf{p})=-\delta_{1j},j=1,\ldots,7 \\
  ((\bfe_j(\bfe_1\mathbf{p}))\overline{\mathbf{p}},\bfe_1) & =((\bfe_j(\bfe_1\mathbf{p})),\bfe_1\mathbf{p})=0,j=1,\ldots,7.
\end{align*}

So we have $g_0=R_1g$ and $g_1=g$. Thus if \eqref{eq:SW} holds, then we have $$g=-R_1^2g,\quad \forall g\in L^p(\R^7,\R)$$
which is impossible. So we complete our proof.
\end{proof}

\begin{remark}
We give some remarks on  equation \eqref{eq:SW} .

Now we suppose  equation \eqref{eq:SW} holds,
let
   $$u_j(t, x)=P_t*f_j, \quad  j=0,1,\ldots, 7, $$
   follow the \cite[P.332]{GTM249} , we know that $\mathbf{F}=(u_0, u_1, \ldots, u_7)$ satisfies the following system of generalized Cauchy-Riemann equations:
\begin{align*}
  \sum_{j=0}^{7}\frac{\partial u_j}{\partial x_j}(t,x) =0  \\
  \frac{\partial u_j}{\partial x_k}(t,x)=\frac{\partial u_k}{\partial x_j}(t,x), 0\le j\neq k\le 7,
\end{align*}
here $\dfrac{\partial \ }{\partial x_0}=\dfrac{\partial \ }{\partial t}$. And this implies $\mathbf{F}\in H^p(\R^8_+)$ , where$H^p(\R^8_+)$ denotes the  $H^p$ space of conjugate harmonic functions in Stein-Weiss sense\cite[P.236]{SW71}.

What's more the Cauchy integral of $f_0$ is just
$$F=C_{\O}(f_0)=\frac{1}{2}\sum_{j=0}^{7}u_j\overline{\bfe_j}. $$
A classical result \cite{LP2} on octonionic analytic functions says
that $F$ is not only left octonionic analytic, but also right octonionic analytic.
But we know, not all left octonionic analytic function is either  right octonionic analytic. Thus, from this  point view, we can also get that the equation \eqref{eq:SW} doesn't hold generally.

\end{remark}

%

\end{document}